\documentclass[11pt]{article}

\usepackage{amsmath,amssymb,latexsym}
\usepackage{xypic}
\usepackage{verbatim}
\usepackage{amsfonts}
\usepackage{fancyhdr}
\usepackage{indentfirst}
\usepackage{newlfont} 
\usepackage[all]{xy}                            
\usepackage{newlfont}
\usepackage{amsthm}
\oddsidemargin=30pt \evensidemargin=20pt
\oddsidemargin=30pt \evensidemargin=20pt
\hyphenation{}                          
\theoremstyle{plain}                    
\newtheorem{theo}{Theorem}[section]   
\newtheorem{cor}{Corollary}[section]
\newtheorem{conj}[theo]{Conjecture}
\newtheorem{lem}[theo]{Lemma}
\newtheorem{lemma}[theo]{Lemma}
\theoremstyle{definition}               
\newtheorem{defin}{Definition}[section]
\theoremstyle{remark}                   

\renewcommand{\o}{\mathcal{O}}
\newcommand{\ox}{\mathcal{O}_X}
\renewcommand{\j} {\mathcal{J}}
\newcommand{\la}{\longrightarrow}
\newcommand{\p}{\mathbb{P}^1}
\newcommand{\q}{\mathbb{Q}}

\renewcommand{\k}{K_X}
\renewcommand{\j}{\mathcal{J}}
\newcommand{\round}[1]{\lfloor #1 \rfloor}
\newcommand{\roundup}[1]{\lceil #1 \rceil}
\newcommand{\calO}{{\mathcal O}} 
\newcommand{\Q}{\mathbb{Q}}
\begin{document} 
\title{Effective log Iitaka fibrations for surfaces and threefolds.}

\author{Gueorgui Tomov Todorov\thanks{The author would like to thank Professor Christopher Hacon for suggesting the problem and many useful conversations and suggestions.}}

\maketitle
\begin{abstract} We prove an analogue of Fujino and Mori's ``bounding the denominators'' \cite[Theorem 3.1]{fm} in  the log canonical bundle formula (see also \cite[Theorem 8.1]{shok}) for Kawamata log terminal pairs of 
 relative dimension one.
 As an application we prove that for 
  a klt  pair  $(X,\Delta)$ of Kodaira codimension one and dimension at most three such that the coefficients of $\Delta$ are in a DCC set $\mathcal{A}$,
 there is a natural number $N$ that depends only on $\mathcal{A}$ for which 
$\round{N(K_X+\Delta)}$  induces the Iitaka fibration.  We also  prove a 
birational boundedness result for klt surfaces of general type.
\end{abstract}

\section{Introduction.}

 

Let us start by recalling  Kodaira's  canonical bundle formula for a minimal elliptic surface $f\colon S\to C$ defined over the complex number field: $$ K_S=f^*(K_C+B_C+M_C). $$ The {\em moduli part} $M_C$ is a $\Q$-divisor such that $12M_C$ is integral and $\calO_C(12M_C)\simeq J^*\calO_{{\mathbb P}^1}(1)$, where $J\colon C\to {\mathbb P}^1$
 is the $J$-invariant function. The {\em discriminant} $B_C=\sum_P b_PP$, supported 
by the singular locus of $f$, is computed in terms of the local monodromies around 
the singular fibers
 $S_P$. Kawamata~\cite{kaw97,kaw98}
 proposed an equivalent definition, which does not require classification of the fibers: $1-b_P$ 
is the {\em log canonical threshold} of the log pair $(S,S_P)$ in a neighborhood of the fiber $S_P$. 
 
 A higher dimensional analogue consists of a log klt pair $(X,\Delta)$ and
surjective morphism such
 that the Kodaira dimension of $K_X+\Delta$ restricted to the general fibre is zero. For now let us assume  that $K_X+\Delta=f^*D$ for some $\q$-divisor $D$
 on $Y$. Then we can define the  \emph{discriminant}  or divisorial part on $Y$ for  $K_X+\Delta$ to be the  $\Q$-Weil divisor $B_Y:=\sum_P b_P P$, where $1-b_P$ is the maximal
 real number $t$ such that the log pair $(X,\Delta+tf^*(P))$ has log canonical singularities over the
 generic point of $P$. The sum runs over all codimension one points of $Y$, but it has finite
support. The \emph{moduli part} or \emph{J-part} is the unique $\Q$-Weil divisor $M_Y$ on $Y$ satisfying 
$$ K_X+\Delta= f^*(K_Y+B_Y+M_Y). $$
 
 According to Kawamata \cite[Theorem 2]{kaw}(see also Ambro \cite[Theorem 0.2 (ii)]{ambroSBP} and Fujino \cite{fujino}) we know 
that on some
 birational model $\mu:Y'\la Y$ the moduli divisor $M_{Y'}$  is nef. 

 Some of the main questions
 concerning the moduli part are  the following.

\begin{conj}\label{conj} (\cite[Conjecture 7.12]{shok}) Let $(X,\Delta)$ and  $f:X\la Y$ be as above and let us write as before 
$$
K_X+\Delta=f^*(K_Y+B_Y+M_Y).
$$ 

 Then we have the following
\begin{description}
\item[(1)] (Log Canonical Adjunction) There exists a birational contraction 
$\mu:Y'\la Y$ such that after base change the induced moduli divisor $M_{Y'}$ on $Y'$ is semiample. 
\item[(2)] (Particular Case of Effective Log Abundance Conjecture). Let $X_\eta$ be the generic fibre of $f$. Then $I(K_{X_\eta}+\Delta_{X_\eta})\sim 0$, where 
$I$ depends only on $\dim X_\eta$ and the horizontal multiplicities of $\Delta$.
\item[(3)] (Effective Adjunction)
There exist a positive integer depending only on the dimension of $X$ and the 
horizontal multiplicities of $\Delta$ such that 
$IM_{Y'}$ is base point free on some model $Y'/Y$.
\end{description}
\end{conj}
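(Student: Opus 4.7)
\medskip
\noindent\textbf{Proof proposal.} The conjecture is open in general; my plan is to outline an attack in the relative dimension one setting, where (3) is the prototype of the ``bounding the denominators'' result that the paper promises to establish. The starting point is that by Kawamata's theorem the moduli part $M_{Y'}$ is already nef on some birational model, so the task for (3) is really to produce a \emph{uniform integer} $I$ that clears denominators (and for the stronger part (3), achieves base point freeness). I would begin by performing a log resolution and a semistable reduction $Y'' \to Y$ so that the fibration $f''\colon X''\to Y''$ has simple normal crossing singular locus, the horizontal part of $\Delta$ has smooth support meeting the fibers transversally, and the discriminant $B_{Y''}$ is an snc divisor. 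Because the formation of $B$ and $M$ is functorial under such base changes and under further log crepant modifications of $X$, it suffices to bound the denominator of $M_{Y''}$ on such a standard model.

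Using the relative dimension one assumption, the generic fiber $X_\eta$ is a smooth curve $F$ with a $\Q$-divisor $\Delta_F$ satisfying $\kappa(K_F+\Delta_F)=0$. Either $F$ is elliptic with $\Delta_F=0$, or $F\simeq \p$ and $\Delta_F=\sum\delta_i p_i$ with $\deg(K_F+\Delta_F)=0$ and the $\delta_i$ equal to the horizontal multiplicities of $\Delta$. In the elliptic case, Kodaira's formula gives $12M_{Y''}$ integral via the $J$-map to the $j$-line, recovering the classical bound. In the rational case, once the horizontal multiplicities are fixed (or even drawn from a DCC set), the possible pairs $(F,\Delta_F)$ form a bounded family, so there is a universal $I$ with $I(K_F+\Delta_F)\sim 0$; this is exactly part (2) in this setting, and it is the key input. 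Cyclic covers trivialising this linear equivalence then put the family of log pairs over $Y''$ into a variation-of-Hodge-structure package, and one can read the denominators of $M_{Y''}$ from the monodromy of this local system; this is the step I would model directly on Fujino--Mori [fm, Theorem 3.1], where the boundary-free version is carried out.

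Part (1), the semiampleness of $M_{Y'}$, I would approach via the Hodge-theoretic description of $M_{Y'}$ as (the pushforward of) a direct summand of a variation of Hodge structures, combined with Kawamata's positivity and Kollár's technique for promoting nefness to semiampleness on an appropriate normalisation. After the cyclic cover trivialising monodromy, $M_{Y'}$ becomes the determinant of a Hodge bundle, and one tries to show that some multiple defines the moduli map to the coarse moduli space of log Calabi--Yau fibers.

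The hard step, and the reason (1) is open in general, is the absence of a compactification of the moduli of log Calabi--Yau pairs with sufficiently good functorial polarisation: one needs to produce a morphism to a projective variety whose pullback is a multiple of $M_{Y'}$. In the relative dimension one case this is manageable because of the explicit description of log Calabi--Yau curves, but passing beyond it requires control over how $M_{Y'}$ interacts with further birational modifications of the total space $X$, which is where the $b$-divisor formalism of Shokurov and the ``saturation'' arguments of [shok] enter. For the purposes of the present paper, I would content myself with establishing the denominator bound analogous to (3) in relative dimension one, and derive (2) and the partial version of (3) as corollaries of that bound together with the boundedness of log Calabi--Yau curves with DCC coefficients.
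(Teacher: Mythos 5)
The statement you were given is a conjecture attributed to Shokurov; the paper does not prove it, and you have correctly recognised this. What the paper actually establishes (Theorem~\ref{main}) is the weaker ``bounded denominators'' statement: in relative dimension one there is an integer $N$, depending only on the horizontal multiplicities of $R$, such that $NJ(X/Y,R)$ is integral. Your sketch of the attack for this partial result is essentially the route the paper follows: reduce to $Y$ a curve with normal crossings (Step~I), pass to a Galois base change making the relevant local systems unipotent (Step~II), choose $m$ clearing the horizontal denominators --- your ``universal $I$ with $I(K_F+\Delta_F)\sim 0$,'' which is item (2) of the conjecture in relative dimension one --- and build a cyclic cover trivialising $m\Delta\sim mM$ so that the denominator of the moduli part can be read off from the monodromy of the induced local system (Steps~III--IV), all modelled on Fujino--Mori.

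What your outline leaves implicit is the precise mechanism that converts ``read off from monodromy'' into an explicit integer, which is the whole point of the paper's approach over the earlier Prokhorov--Shokurov argument. In the paper, the Galois group $G$ acts on $h'_*\omega_{Z/Y}\otimes\mathbb{C}(P)$, hence on the line $L\otimes\mathbb{C}(P)$ by a character $\chi_P$; by Lemma~\ref{trivial}, $NJ(X_1/Y_1,R_1)$ is integral iff every $\chi_P^N$ is trivial, and the order $l$ of $\chi_P$ is bounded via $\varphi(l)\le h^0(E,\omega_E)\le(m-1)^2$ for the general fibre $E$ of the cyclic cover. Setting $N=\mathrm{lcm}\{l:\varphi(l)\le(m-1)^2\}$ then gives an explicitly computable bound. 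If you wanted to turn your sketch into a proof of Theorem~\ref{main}, you would need to supply exactly this character-theoretic bookkeeping. Your remarks about parts~(1) and the full~(3) being out of reach are accurate; the paper instead invokes \cite[Theorem~8.1]{shok} for the semiampleness/base-point-freeness input in Section~5.
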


There is a proof of the above conjecture  by Shokurov and Prokhorov in the case
 in which the relative dimension of $f$ is one (Theorem 8.1 of \cite{shok}). For results towards {\bf (1)} see Ambro \cite{ambro3}. Here we prove that there exist a positive integer $I$ depending only on the dimension of $X$ and the 
horizontal multiplicities of $\Delta$ such that $IM$ is integral when the 
relative dimension   is  one using  ideas of Mori and Fujino \cite{fm} 
(see also \cite{ko}). The main advantage of our proof is that the number $I$  that we produce is explicitly computable.

 Our main interest in Conjecture \ref{conj} is because of its applications towards boundedness results for Iitaka fibrations. When $X$ is of general type 
the existence of a natural number $N$ such that $|NK_X|$ induces the Iitaka fibration is know by  results of C. Hacon and J. M$^\textrm{c}$Kernan(cf. \cite{chris1}) and Takayama(cf. \cite{tak}) following ideas by Tsuji.  Similar results in low dimension when $X$ is not of general type  appear in the recent preprints  \cite{vie,ringler,pacienza}. Here we address the boundedness of Iitaka fibrations in the log case.

\begin{theo}\label{appl} Let $(X,\Delta)$ be a klt log pair of Kodaira codimension one and dimension at most three. Then there is a natural number 
$ N$ depending only on the coefficients of 
$\Delta$ such that  $|\round{N(K_X+\Delta)}|$ induces the Iitaka fibration.
\end{theo}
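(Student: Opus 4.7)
The plan is to reduce the statement to an effective birationality question on the base of the Iitaka fibration. After replacing $(X,\Delta)$ by a suitable log-smooth birational model and running an appropriate MMP, I may assume the Iitaka map is a morphism $f\colon X\to Y$ with connected fibers, $Y$ smooth, and that the general fiber is a $1$-dimensional log pair with $K_X+\Delta$ numerically trivial on it (this is where the Kodaira codimension one and $\dim X\le 3$ hypotheses enter). The log canonical bundle formula then yields a $\Q$-linear equivalence $K_X+\Delta\sim_\Q f^*(K_Y+B_Y+M_Y)$ on a sufficiently high model, with $(Y,B_Y+M_Y)$ klt and $K_Y+B_Y+M_Y$ big, since $\kappa(K_X+\Delta)=\dim Y$.

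The key ingredient is the bounding-denominators theorem established earlier in the paper: there is an explicit positive integer $I$, depending only on the DCC set $\mathcal A$ containing the coefficients of $\Delta$, such that on an appropriate birational model $Y'$ the divisor $IM_{Y'}$ is integral, and moreover nef by Kawamata's theorem. A direct computation of the discriminant via log canonical thresholds of $(X,\Delta)$ over codimension-one points of $Y'$ shows that the coefficients of $B_{Y'}$ also lie in a DCC set $\mathcal A'$ determined by $\mathcal A$. Thus the problem reduces to bounding $N$ so that $|\round{N(K_{Y'}+B_{Y'}+M_{Y'})}|$ induces a birational map on $Y'$; pulling back by $f$ and adding a suitable effective pluricanonical divisor supported in the exceptional/non-$f$-contracted locus returns the statement on $X$.

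For $\dim Y=1$ the base is a curve, $\deg(K_{Y'}+B_{Y'}+M_{Y'})>0$ is rational with denominator dividing a number that depends only on $I$ and $\mathcal A'$, and $g(Y')$ is itself bounded because $(Y',B_{Y'}+M_{Y'})$ is a klt log-general-type curve; any $N$ clearing these denominators and producing degree larger than $2g(Y')+1$ works. For $\dim Y=2$ I apply the birational boundedness result for klt surfaces of general type (also proved in this paper) to $(Y',B_{Y'})$, treating $(1/I)(IM_{Y'})$ as an additional nef $\Q$-divisor of controlled Cartier index; the extra nef summand only improves positivity in the standard Viehweg--Zhang style effective birationality argument, so the required $N$ still depends only on $\mathcal A$.

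The main obstacle is precisely the coupled effective birationality statement for the pair on the base when $\dim Y=2$: one must uniformly bound $N$ for a family of klt surface pairs whose boundary coefficients lie only in a DCC set and whose log canonical divisor a priori has unbounded Cartier index because of the moduli contribution $M_{Y'}$. The new bounding-denominator theorem is essential here, because it replaces the possibly irrational $M_{Y'}$ by an integral nef divisor $IM_{Y'}$ that can be fed into the surface birational-boundedness argument; the delicate point is to ensure that both $I$ and the birational-boundedness constant depend only on $\mathcal A$, which forces one to track the DCC set $\mathcal A'$ inherited by $B_{Y'}$ through the canonical bundle formula.
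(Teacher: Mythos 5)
Your overall reduction to effective birationality on the base of the Iitaka fibration via the log canonical bundle formula is the right framework and matches the paper's strategy, but there are two real problems with the details.

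The more serious one is in the case $\dim Y=2$. You propose to feed $K_{Y'}+B_{Y'}+M_{Y'}$ into the surface birational boundedness theorem by ``treating $(1/I)(IM_{Y'})$ as an additional nef $\Q$-divisor of controlled Cartier index,'' asserting that the extra nef summand only improves positivity. This does not work as stated. The surface birational boundedness theorem (Theorem 6.1 in the paper) is a statement about a klt pair $(Y,B)$ whose boundary coefficients lie in a DCC subset of $[0,1]$; the Alexeev/Alexeev--Mori volume lower bound and the whole log-canonical-center machinery are applied to $K_Y+B$, not to $K_Y+B$ plus an arbitrary nef $\Q$-divisor. A nef summand is not a boundary divisor and cannot simply be absorbed; in particular, its coefficients are not bounded and need not lie in $[0,1]$. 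To get around this, the paper invokes the much stronger input that in relative dimension one the Effective Adjunction statement (Conjecture 1.1(3), due to Shokurov--Prokhorov) holds, so that some $lM_{Y'}$ is actually \emph{base point free}, with $l$ controlled by $\mathcal{A}$. One then replaces $lM_{Y'}$ by a general member $M'$, forms $B_1=B_{Y'}+\tfrac{1}{l}M'$ with coefficients still in a DCC set in $[0,1]$, and only then applies the surface boundedness theorem (after a further rounding trick via Alexeev--Mori's $\beta$-constant). Your argument only uses the weaker fact that $IM_{Y'}$ is integral and nef (which is what the paper's own Theorem 3.2 provides, and is the right input for the curve base case), and this is not enough to make the surface argument go through. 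This is a genuine gap.

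The secondary issue is in the case $\dim Y=1$: you claim $g(Y')$ is bounded because $(Y',B_{Y'}+M_{Y'})$ is a klt curve of log general type. That is false — the genus of the base of an Iitaka fibration is not bounded (e.g.\ take $C\times E$ with $E$ elliptic and $C$ of arbitrary genus). The correct argument, as in the paper, splits into $g\ge2$, $g=1$, $g=0$; when $g\ge2$ the contribution $N\deg K_{Y'}=N(2g-2)$ already dominates $2g+1$ once $N$ is at least $3$ times the denominator-clearing integer, so no bound on $g$ is needed; when $g=0,1$ one uses the DCC structure on $B_{Y'}$ and the integrality of $mJ$ (the paper's Lemma 4.4 handles $g=0$). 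As written, your curve-base argument would not give a uniform $N$.
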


 The proof of the above Theorem in dimension two  relies on the existence 
of $I$ as in the Conjecture and follows the strategy in Section 6 of \cite{fm}.
 For the proof of the Theorem in dimension three 
we need to bound the  smallest positive number $N$ such that 
$|N(K_X+\Delta)|$ induces  a birational map 
for any log surface of general type with the coefficients of $\Delta$ in a DCC set $\mathcal{A}$   as a function of the DCC set only
 (i.e. $N=N(\mathcal{A})$). 
 This is an interesting question in its own right (cf. \cite{vie}) and we address it in the  last section.  We can show that:
\begin{theo} Let $(X,\Delta)$ be a klt surface
 and assume that  the coefficients of $\Delta$ are in a DCC set $\mathcal{A}$ . Then there is a number $N$ depending only on $\mathcal{A}$ such that $\roundup{m(K_X+\Delta)}$ (and  $\round{m(K_X+\Delta)}$) defines a  birational map for $m\ge N$.
\end{theo}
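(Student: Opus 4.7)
The plan is to adapt the Tsuji / Hacon--McKernan / Takayama strategy for bounding pluricanonical maps to the log setting, with the DCC hypothesis on the coefficients of $\Delta$ taking the role of a fixed-index assumption. Since $|\round{m(K_X+\Delta)}|$ can only define a birational map when $K_X+\Delta$ is big, I read the statement as carrying the implicit hypothesis that the pair is of log general type. The first step is to run the log MMP for surfaces and replace $(X,\Delta)$ by a log minimal model, without enlarging the coefficient set $\mathcal{A}$; on the minimal model $K_X+\Delta$ is nef and big, and birationality of the linear systems $|\round{m(K_X+\Delta)}|$ and $|\roundup{m(K_X+\Delta)}|$ is preserved.

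The second step is to establish a uniform positive lower bound $v_0=v_0(\mathcal{A})$ for the volume $(K_X+\Delta)^2$. This is the crucial use of the DCC hypothesis: by Alexeev's boundedness results for log surfaces of general type with DCC coefficients, the corresponding set of volumes satisfies the DCC and is therefore bounded away from $0$.

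With $v_0$ in hand, the third step is the standard cutting-out argument at two general points $x_1,x_2\in X$. Asymptotic Riemann--Roch together with $(K_X+\Delta)^2\ge v_0$ produces an effective $\Q$-divisor $D\sim_{\Q}\lambda(K_X+\Delta)$ with $\lambda=\lambda(v_0)$ (of order $\sqrt{8/v_0}$) and $\mathrm{mult}_{x_i}(D)>2$ for $i=1,2$; a tie-breaking perturbation, available because $K_X+\Delta$ is big, yields an effective boundary $\Delta'=\Delta+cD$ whose unique non-klt place lies over exactly one of $x_1,x_2$. Kawamata--Viehweg / Nadel vanishing applied to $\round{m(K_X+\Delta)}-(K_X+\Delta')$, which is $\Q$-linearly equivalent to $(m-1-c\lambda)(K_X+\Delta)$ minus the fractional part $\{m(K_X+\Delta)\}$ (an effective divisor with coefficients in $[0,1)$ supported on $\mathrm{Supp}(\Delta)$), is nef and big once $m\ge \lambda+2+\epsilon(\mathcal{A})$; this lifts a section of $\o_X(\round{m(K_X+\Delta)})$ separating $x_1$ from $x_2$. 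Exchanging the roles of the two points gives birationality. Finally, the passage from $\round{\cdot}$ to $\roundup{\cdot}$ is formal, since $\roundup{m(K_X+\Delta)}-\round{m(K_X+\Delta)}$ is an integral effective divisor supported on $\mathrm{Supp}(\Delta)$.

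The main obstacle is the second step: producing the volume bound $v_0(\mathcal{A})>0$. Once it is in hand, steps one and three are essentially translations of the integral-coefficient argument, with the DCC hypothesis used again in step three to control the rounding discrepancy $\epsilon(\mathcal{A})$ uniformly in $m$; but securing $v_0(\mathcal{A})$ is the deep input, resting on Alexeev's boundedness for log surfaces with DCC coefficients.
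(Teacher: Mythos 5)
There is a genuine gap in the third step. Your ``tie-breaking perturbation'' produces a log canonical place over exactly one of $x_1,x_2$, but tie-breaking does \emph{not} reduce the dimension of the corresponding minimal log canonical centre. In a surface the centre through a general $x_1$ may well be an irreducible curve $Z$ (moving in a family that covers $X$), and then Nadel vanishing gives you a surjection onto sections on $Z$, not onto the residue field at $x_1$. To cut down from $Z$ to an isolated non-klt point, or even to extract a separating section from $Z$, you need a uniform lower bound on $\deg\bigl((K_X+\Delta)|_Z\bigr)$. When $K_X$ is integral this is automatic ($K_X\cdot Z\ge 1$), but with $\Q$-coefficients in a DCC set the intersection number could a priori be arbitrarily small, and establishing the uniform lower bound is exactly where the hard work lies.

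The paper devotes most of the proof to precisely this case. After reducing (via McKernan's covering-family lemma) to a generically finite map $\pi:X'\to X$ with a fibration $X'\to B$ whose general fibre maps to $V_x$, the paper analyzes separately: (i) $\pi$ not birational, where two distinct centres through a general point and a perturbation to minimal centres force zero-dimensional centres; (ii) the rational map $X\dashrightarrow B$ not a morphism, where two fibres $V_1\neq V_2$ through a common point give $(K_X+\Delta)\cdot V_1\ge V_2\cdot V_1\ge 1$ up to the factor $1/a_1$; and (iii) a morphism to a curve $B$, where the DCC hypothesis is used a \emph{second} time, through Alexeev--Mori's theorem that $K_{X''}+\Delta''+(1-\beta)\sum E_i+\roundup{N_1}$ is still big, to bound the degree of $(K_X+\Delta)$ on the general fibre from below. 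Your proposal uses the DCC hypothesis only once (for the volume bound $v_0(\mathcal{A})$) and treats the rest as a routine translation of the integral-coefficient argument; that translation fails precisely at the one-dimensional centre case, and recognizing that the DCC condition has to enter a second time to control restricted degrees is the main content of the paper's proof.
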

 The above two Theorems complete the boundedness of Iitaka fibrations of klt pairs of dimension two ( for the case of Kodaira dimension zero see \cite{alex}).
 The proof is based on the fact that by a result  of \cite{alex} (see also \cite{almo}) for these surfaces we have a
 lower bound of the volume which  allows us to produce centres of log canonical singularities of a controlled multiple of 
$K_X+\Delta$. Using standard techniques we reduce to the case where 
the centres are isolated points. In order to achieve this, using ideas of 
M$^\textrm{c}$Kernan \cite{mac} and Tsuji, we 
 produce a morphism to a curve and  we 
 use this morphism
to produce the required sections (cf. \cite{tod}).

\section{Preliminaries.}
\subsection{Notations and Conventions.}

We will work over the field of complex numbers $\mathbb{C}$. 
A $\q$-Cartier divisor $D$ is nef if $D\cdot C \ge0$ for any curve $C$ on $X$.
 We call two $\q$-divisors $D_1, D_2$ $\q$-linearly equivalent $D_1\sim_\q D_2$ if 
there exists an integer $m>0$ such that $mD_i$ are integral and linearly equivalent.
We call two $\q$-Cartier divisors $D_1, D_2$ numerically equivalent $D_1\equiv D_2$ if $(D_1-D_2)\cdot C=0$  for any curve $C$ on $X$.
A log pair $(X,\Delta)$ is a normal variety $X$ and an effective $\q$-Weil 
divisor $\Delta$ such that $\k+\Delta$ is $\q$-Cartier. A projective morphism 
$\mu:Y \la X$ is a log resolution of the pair  $(X,\Delta)$ if $Y$ is smooth and 
$\mu^{-1}(\Delta)\cup\{\textrm{exceptional set of } \mu\}$ is a divisor 
with simple normal crossing support. For such $\mu$ we write   
$\mu^*(\k+\Delta)  =K_Y+\Gamma$, and $\Gamma=\Sigma a_i\Gamma_i$ 
where $\Gamma_i$ are distinct integral divisors. A pair is called 
klt (resp. lc) if there is a log resolution $\mu:Y \la X$ such that in 
the above notation we have $a_i <1$ (resp. $a_i\le 1$). The number $1-a_i$ is called 
log discrepancy of $\Gamma_i$ with respect to the pair  $(X,\Delta)$.  
We say that a subvariety  $V \subset X$ is a log canonical centre if it is the 
image of a divisor of log discrepancy at most zero.
 A log canonical place is a valuation corresponding to a divisor of log discrepancy at most zero.
 A log canonical centre is pure if $\k+\Delta$ is log canonical at the generic point of $V$. If moreover there is a unique log canonical place lying over the generic point of V, then we say that $V$ is exceptional. 
LCS$(X,\Delta,x)$ is the union of all log canonical centres of $(X,\Delta)$ through the point $x$.
We will denote by LLC$(X,\Delta,x)$ the set of all log canonical centres 
containing a point $x \in X$. 

\subsection{Generalities on cyclic covers.}
\begin{defin}\label{cyc} 
\end{defin}
Let $X$ be a smooth variety and $L$ a line bundle on $X$ and $D$ an integral
divisor. Assume that $L^m\sim \ox(D)$. Let $s$ be any rational section and
$1_D$ the constant section of $\ox(D)$. Then $1_D/s^m$ is a rational function 
which gives a well defined element of the quotient group $k(X)^*/(k(X)^*)^m$, 
thus a well defined degree $m$ field extension $k(X)(^m\sqrt{1_D/s^m})$. Let 
$\pi:X'\la X$ denote the normalization of $X$ in the field $k(X)(^m\sqrt{1_D/s^m})$. Then 
\begin{description}
\item[(1)] $\pi_*\o_{X'}=\sum_{i=0}^{m-1}L^{-i}(\round{iD/m})$, and 
\item[(2)] $\pi_*\omega_{X'}=\sum_{i=0}^{m-1}\omega_X\otimes L^i(-\round{iD/m})$.
\end{description}
 
 In particular, if $E$ is any integral divisor then the normalized cyclic cover obtained from  $L^m\sim \ox(D)$ is the same as the normalized cyclic cover obtained from  $(L(E))^m\sim \ox(D+mE)$. If $D$ has simple normal crossing support then $X'$ has only rational singularities.

\subsection{DCC sets}
\begin{defin} A subset $\mathcal{A}$ of $\mathbb{R}$ is said to satisfy the descending chain condition if any strictly decreasing subsequence of elements of 
$\mathcal{A}$ is finite. In this case we  also say that $\mathcal{A}$ is a DCC set.
\end{defin}
 
For the general properties of DCC sets we refer to Section 2 of \cite{almo}.

\begin{defin} A sum of $n$ sets $\mathcal{A}_1,\mathcal{A}_2,...,\mathcal{A}_n$ is defined as
$$
\sum_{i=1}^n\mathcal{A}_i=\{a_1+a_2+...+a_n|a_i\in\mathcal{A}_i\}.
$$
Define also 
$$
\mathcal{A}_\infty=\{0\}\cup\bigcup_{n=1}^{\infty}\sum_{i=1}^n\mathcal{A}.
$$
\end{defin}
 
If $\mathcal{A}$ is a DCC set and it contains only non-negative numbers 
then it is easy to see that $\mathcal{A}_\infty$ is also a DCC set.

\begin{defin} For $\mathcal{A}\subset [0,1]$ we define the derivative set 
$$
\mathcal{A}'=\{\frac{n-1+a_\infty}{n}|n\in\mathbb{N},a_\infty\in\mathcal{A}_\infty\cap[0,1]\}\cup\{1\}.
$$
\end{defin}

 It is easy to verify that if  $\mathcal{A}$ is a DCC set then so is $\mathcal{A}'$.

\section{Bounding the moduli part.}

We start by describing the moduli part as it appears in \cite{ko}.
Let $f:(X,R)\la Y$ be a proper morphism of normal 
varieties with generic fibre $F$ and $R$  a 
 $\q$-divisor such that 
$K_X+R$ is $\q$-Cartier and assume that $(F,R_{|F})$ is lc and that $K_F+R_{|F}\sim_{\q}0$.
 Let $Y^0\subset Y$ and $X^0=f^{-1}(Y^0)$ be open subsets such that $K_{X^0}+R^0\sim_{\q} 0$ where $R^0:=R_{|X^0}$ (cf. \cite[Lemma 8.3.4]{ko}).
Write $R^0=D^0+\Delta^0$ with $D$ integral and $\round{\Delta}=0$.

 Assume that $X^0, Y^0$ are smooth and $R^0$ is relative simple normal crossing over $Y^0$.

 Define $V=\o_{X^0}(-K_{X^0}-D^0)$. Let $m$ be (the smallest) positive integer such that $m\Delta^0$ is an integral divisor. Then we have an isomorphism 
$$ 
V^{\otimes m}\cong \o_{X^0}(m\Delta^0),
$$
which defines a local system $\mathbb{V}$ on $X^0\setminus R^0$ (cf. \cite[Definition 8.4.6]{ko}).

 Assume also that $Y$ is smooth, $Y\setminus Y^0$ is a simple normal crossing divisor and that $R^{\dim F}f_{*}\mathbb{V}$ has only unipotent monodromies.
Then the bottom piece of the Hodge filtration of $R^{\dim F}f_{*}\mathbb{V}$ has a natural extension to a line bundle $J$. Set $J(X/Y,R)$ to be  the divisor class corresponding to $J$.

 If the smoothness, normal crossing, and unipotency  assumptions above are not satisfied, take a generically finite morphism $\pi:Y'\la Y$ and a resolution of the main component $f':X'\la X\times_{Y} Y'\la Y'$ 
for which  the assumptions hold and $R'$ the corresponding divisor.
 Then define 
$$J(X/Y,R)=\frac{1}{\deg \pi}\pi_*J(X'/Y',R').$$

We need the following definition.

\begin{defin}(\cite[Definition 8.4.2]{ko}) Assume that $(X,R)$ is lc and $K_X+R\sim_\q 0$ and write $R=R_{\ge 0}-R_{\le 0}$ as the difference of its positive and negative parts. Define 
$$
p_g^+(X,R):=h^0(X,\o_X(\lceil R_{\le 0} \rceil)).
$$
\end{defin}
\begin{theo}\label{kodform}(\cite[Theorem 8.5.1]{ko}) Let $X, Y$ be normal projective varieties and let $f:X\la Y$ a dominant morphism with generic fibre $F$. Let $R$ 
be a $\q$-divisor on $X$ such that $K_X+R$ is $\q$-Cartier and $B$ a reduced divisor on $Y$. Assume that
\begin{description}
\item[(1)] $K_X+R\sim f^*($some $\q$-Cartier divisor on $Y),$
\item[(2)]$p_g^+(F,R_{|F})=1$, and 
\item[(3)] $f$ has slc fibres in codimension 1 over $Y\setminus B$ (cf. \cite{ko}).
\end{description}
Then one can write 
$$
K_X+R\sim_\q f^*(K_Y+J(X/Y,R)+B_R),\textrm{   where}
$$
\begin{description}
\item[(i)]  $J(X/Y,R)$ is the moduli part defined above,
\item[(ii)] $B_R$ is the unique $\q$-divisor supported on $B$ for which there is a codimension $\ge 2$ closed subset $Z \subset Y$ such that 
$(X\setminus f^{-1}(Z),R+f^*(B-B_R))$ is lc and every irreducible component of $B$ is dominated by a log canonical centre of $(X,R+f^*(B-B_R))$.
\end{description}
\end{theo}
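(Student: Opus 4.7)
First, for each prime divisor $P$ on $Y$, I set
$$
t_P := \sup\{t \in \q : (X, R + tf^*P) \text{ is lc at the generic point of } P\},
$$
and let $b_P := 1 - t_P$ if $P \subset B$ and $b_P := 0$ otherwise, then $B_R := \sum_P b_P P$. By construction $\mathrm{Supp}(B_R) \subset B$. For each $P \subset B$, the coefficient $1 - b_P = t_P$ is the largest for which $(X, R + (1 - b_P) f^*P)$ is lc at the generic point of $P$; this gives the lc condition in (ii) along $P$, and by maximality an lc centre of $(X, R + f^*(B - B_R))$ dominates $P$. Over $P \not\subset B$ the pair $(X, R + f^*(B - B_R))$ reduces generically to $(X, R)$, which is lc on the generic fibre by hypothesis (3). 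Uniqueness of $B_R$ is forced prime-by-prime by the two conditions of (ii). Hypothesis (1) gives $K_X + R \sim_\q f^*D$ for some $\q$-Cartier $D$, so $K_X + R - f^*(K_Y + B_R) \sim_\q f^*M_Y$ for a unique $\q$-linear equivalence class $M_Y$ on $Y$; the content of the theorem is then the identification $M_Y = J(X/Y, R)$.

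For this identification I would shrink to the open $Y^0$ from the discussion above and base-change along a generically finite $\pi'\colon Y' \to Y$ until the smoothness, normal crossing, and unipotent monodromy conditions of the moduli construction hold. Writing $R^0 = D^0 + \Delta^0$ with $m\Delta^0$ integral, the cyclic cover of Definition~\ref{cyc} attached to $V^{\otimes m} \cong \calO(m\Delta^0)$, where $V = \calO(-K_{X^0} - D^0)$, has a dualizing sheaf whose pushforward decomposes into eigenspaces for the cyclic group. The eigenpiece attached to the character of $\Delta^0$, pushed forward to $Y^0$ and passed through top relative cohomology of the fibres, yields the bottom step of the Hodge filtration on $R^{\dim F} f_* \mathbb{V}$; hypothesis (2), $p_g^+(F, R|_F) = 1$, is exactly what forces this bottom step to be a line bundle rather than a higher-rank sheaf. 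Deligne's canonical extension (available thanks to unipotency) extends it across $Y \setminus Y^0$ to give $J$, and the averaging formula $J(X/Y, R) = (1/\deg\pi')\, \pi'_* J(X'/Y', R')$ makes the construction intrinsic to $Y$.

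The main obstacle is the final matching: showing that this canonically extended Hodge line bundle $J$ equals the residue $M_Y$ as a $\q$-linear equivalence class on $Y$. This amounts to a local computation at each prime divisor of $B$, verifying that the pushforward of $K_X + R$ splits as $K_Y + B_R$ (the discrepancy part detected by lc thresholds and absorbed into the boundary) plus $J$ (the Hodge-theoretic piece recording variation of the fibres). The remaining steps --- existence, uniqueness, and support of $B_R$; reduction to the smooth unipotent situation by base change; and the formal extraction of $M_Y$ modulo $\q$-linear equivalence --- are routine once the cyclic cover formalism of Definition~\ref{cyc} and the Hodge-bundle construction recalled before the theorem are in place.
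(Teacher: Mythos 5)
The paper does not prove Theorem~\ref{kodform}; it is quoted verbatim from Koll\'ar \cite[Theorem~8.5.1]{ko} and used as a black box. There is therefore no in-paper proof to compare against, and I will evaluate your sketch on its own terms.

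Your proposal correctly reorganizes the statement: once $B_R$ is defined prime-by-prime via the log canonical threshold along each component of $B$, the claim reduces to identifying the residual $\q$-divisor class $M_Y$ (defined by $K_X + R \sim_\q f^*(K_Y + B_R + M_Y)$) with the Hodge-theoretic moduli part $J(X/Y,R)$. The construction of $B_R$, the lc verification over $Y\setminus B$ via hypothesis (3), and the prime-by-prime uniqueness are handled essentially as you say. The genuine gap is that the core of the theorem --- $M_Y = J(X/Y,R)$ --- is never established. You flag it as ``the main obstacle'' and then describe it only as ``a local computation at each prime divisor of $B$'' without carrying it out, and your closing sentence lists as ``routine'' a set of steps that does not include this identification. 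That identification is exactly where Koll\'ar's argument does the real work: one must compare, over each component of $B$, the coefficient produced by Deligne's canonical extension of the bottom Hodge piece (read off from the local monodromy eigenvalues after the unipotent base change, through the eigensheaf decomposition of the cyclic cover's relative dualizing sheaf) with the coefficient coming from the lc threshold, then verify that the two bookkeepings match after averaging over $\pi'$. Nothing in your sketch explains why those two numbers agree. There is also a smaller point you pass over: extracting a well-defined class $M_Y$ on $Y$ from a relation of the form $K_X + R \sim_\q f^*(\cdot)$ requires $f_*\calO_X = \calO_Y$ (Stein factorization / connected fibres), which is not automatic for a dominant morphism of normal projective varieties and should be addressed before declaring the ``formal extraction of $M_Y$'' routine. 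What you have, then, is a correct high-level reduction together with an honest statement of where the difficulty lies, but not a proof.
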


 Let  $(X_1,R_1)$, $f_1:X_1\la Y_1$ and $B_1$  be a pair satisfying the assumptions of 
Theorem \ref{kodform} and $R_1$ effective on the general fibre.
 Assume 
furthermore that the relative dimension of $f_1$ is one and that $(X_1,R_1)$ is klt.  Then the  
following holds.

\begin{theo}\label{main} There exist an integer $N$ depending only on the horizontal 
multiplicities of $R_1$ such that the divisor $NJ(X_1/Y_1,R_1)$ is integral.
\end{theo}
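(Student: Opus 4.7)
The approach is to adapt the Mori--Fujino bounding-the-denominators strategy (\cite{fm}) to the relative-dimension-one setting, using the cyclic cover construction from Definition \ref{cyc} to reduce to the classical Kodaira formula for elliptic fibrations. Since $(F, R_1|_F)$ is klt with $K_F + R_1|_F \sim_\q 0$ on the general fibre $F$ (a smooth curve), decomposing $R_1 = R^h + R^v$ into horizontal and vertical parts and restricting gives $R^h|_F = \sum a_i P_i$ with $a_i \in (0,1)$ and total degree $2 - 2g(F) \ge 0$. Thus either $g(F) = 1$ with $R^h|_F = 0$, or $F \cong \p$ and $\sum a_i = 2$. In both cases the natural indices to bound are the denominators of the $a_i$.

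Let $m$ be the least common multiple of the denominators of the horizontal multiplicities $a_i$; this depends only on those multiplicities. After replacing $(X_1, R_1)$ and $f_1$ by a log resolution realizing the smoothness, relative snc, and unipotency hypotheses under which $J(X_1/Y_1, R_1)$ is defined, I would apply the cyclic cover of Definition \ref{cyc} on $X^0$ with $L = \ox(-K_{X^0} - \round{R^0})$ and $D = m\{R^0\}$ (the relation $L^{\otimes m} \cong \ox(m\{R^0\})$ holds because $m(K_{X^0} + R^0) \sim 0$), producing $\pi\colon \tilde X \to X_1$ of degree $m$. The induced fibration $\tilde f\colon \tilde X \to Y_1$ has general fibre $\tilde F$ which, by Riemann--Hurwitz, is either an elliptic curve (the original elliptic case) or the elliptic curve obtained by the cyclic cover of $\p$ totally ramified at the $P_i$ with the prescribed indices (the $\p$ case with $\sum a_i = 2$). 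The pulled-back pair $(\tilde X, \tilde R)$ satisfies $\pi^*(K_{X_1} + R_1) = K_{\tilde X} + \tilde R$ with $\tilde R$ having no horizontal part. We now have a klt elliptic fibration to which the classical Kodaira canonical bundle formula applies, giving $12 \, J(\tilde X/Y_1, \tilde R)$ integral.

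Finally, the functoriality of the moduli part under finite covers yields $J(X_1/Y_1, R_1) = \tfrac{1}{m} \pi_* J(\tilde X/Y_1, \tilde R)$, so $N \cdot J(X_1/Y_1, R_1)$ is integral for $N = 12m$, which depends only on the horizontal multiplicities as claimed. The main obstacle will be to verify carefully that the cyclic cover interacts correctly with the definition of $J$: specifically, that the unipotency condition and the Hodge-theoretic construction survive base change by $\pi$ (possibly after a further pullback along the base to restore unipotency), and that Definition \ref{cyc}(2) transforms the log pair so that the horizontal fractional part is entirely absorbed into the ramification of $\pi$. Additional denominators could in principle be introduced by interaction with vertical components of $R_1$; tracking these via the explicit pushforward formulas in Definition \ref{cyc} is the delicate bookkeeping step.
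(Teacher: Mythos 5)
Your proposal has a genuine gap at its central step: the cyclic cover $\tilde F \to F$ of the general fibre is \emph{not} in general an elliptic curve, and the classical $12$-integrality does not apply. Riemann--Hurwitz for a degree-$m$ cyclic cover of $F\cong\p$ ramified over the points $P_i$ with ramification index $q_i$ (the denominator of $a_i$) gives
$$
2g(\tilde F)-2 = -2m + m\sum_i\Bigl(1-\tfrac{1}{q_i}\Bigr),
$$
which bears no direct relation to the hypothesis $\sum a_i=2$. For instance, with four marked points and $(a_1,a_2,a_3,a_4)=(1/2,1/2,2/3,1/3)$ one gets $\sum a_i=2$, $m=6$, ramification indices $(2,2,3,3)$, and $g(\tilde F)=2$. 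In fact $g(\tilde F)$ can be as large as $(m-1)^2$. Your reduction to ``an elliptic fibration to which the classical Kodaira canonical bundle formula applies'' therefore fails for all but the ``standard'' multiplicities $a_i=1-1/q_i$, and the constant $12$ does not come close to being an upper bound. Relatedly, the pulled-back boundary $\tilde R=\pi^*R_1-\mathrm{Ram}(\pi)$ has horizontal multiplicities $p_i-q_i+1\le 0$ (negative unless $a_i=1-1/q_i$), so the claim that $\tilde R$ has no horizontal part is also false, and $(\tilde X,\tilde R)$ need not even satisfy the hypotheses of the canonical bundle formula as stated.

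There is also a misapplication of functoriality. The pushforward identity $J(X/Y,R)=\frac{1}{\deg\pi}\pi_* J(X'/Y',R')$ used in the paper is for a generically finite morphism of the \emph{base} $\pi:Y'\to Y$, arising from the need to restore unipotent monodromy. Your cover $\tilde X\to X$ fixes the base $Y_1$, so this formula does not apply; if anything, the bottom pieces of the Hodge filtrations of $R^1 f_*\mathbb{V}$ and of $R^1\tilde f_*\mathbb{C}$ should be \emph{equal} (the former being the eigen-summand of the latter cut out by $p_g^+=1$), not related by a factor $\frac1m$. The paper instead keeps the cyclic cover $Z\to X$ at full genus, takes a Galois cover of the \emph{base} to make the relevant local systems have unipotent monodromy (so that the moduli part $L$ upstairs is integral), and then bounds the order of the character $\chi_P$ by which the stabilizer acts on $L\otimes\mathbb{C}(P)$; the bound $\varphi(\mathrm{ord}\,\chi_P)\le h^0(E,\omega_E)\le(m-1)^2$ is exactly where the higher genus of the cyclic cover enters. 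The resulting $N=\mathrm{lcm}\{l:\varphi(l)\le(m-1)^2\}$ is what replaces your $12$. Your proposal does correctly identify the cyclic cover of Definition \ref{cyc} as the right construction and correctly flags that the interaction with unipotency is delicate, but the elliptic-reduction shortcut does not survive the genus computation.
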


 We are going to prove the theorem in the case when the restriction of $\Delta$ to the general fibre of $f$ is non-trivial. When the restriction is trivial the Theorem follows as in \cite[Theorem 3.1]{fm}.

{\bf Step I.} We start with some harmless reductions. Cutting by hyperplanes we can reduce to the case when $Y_1$ 
is a curve. From Step 1 of the proof of \cite[Theorem 8.5.1]{ko},
  we can reduce to the case with normal crossing assumptions, that is we can assume that  $X_1, Y_1$ are 
 smooth, $R_1+f_1^*B_1$ and $B_1$ are snc divisors, $f_1$ is  smooth over $Y_1\setminus  B_1$
 and $R_1$ is  relative snc divisor over   $Y_1\setminus  B_1$.

 By Step 2 of the same proof  we can assume also that $B_1=B_{R_1}$.

{\bf Step II - Galois cover of $Y_1$.} By \cite[(4.6) and (4.7)]{mo}  there is a finite Galois cover $\pi:Y\la Y_1$ with Galois group $G$, such that for the
 induced morphism $f:X \la Y$ every possible local system $R^{\dim F}f_{*}\mathbb{V}_j$ 
 (the difference being given by a choice of isomorphism between two line bundles, compare  \cite[Remark 8.4.7]{ko}) has unipotent monodromies around 
every irreducible component of $B$. Note that we can also arrange that $G$ acts on $X$.

 Thus by  \cite[Theorem 8.5.1]{ko} we have that 
the moduli part $L:=J(X/Y,R)$ is integral. Here $\pi_{X}^*(K_{X_1}+R_1)=K_X+R$ and $R=(\pi_{X})_*R_1$.

{\bf Step III - Constructing the right cyclic cover.} 
There is a unique way to write 
$$
R-f^*B=\Delta-G+E,
$$
where $\Delta$ is effective, $\round{\Delta}=0$, the divisors $E$ and $G$ are integral and  vertical.

 Let $M=f^*(K_Y+L+B)-(K_X+E-G+f^*B)$. Notice that $M$ is integral and that $\Delta\sim_\q M$. Pick $m>0$ such that $m\Delta$ is integral on the general fibre $F$. Such $m$ depends only on the horizontal multiplicities of $\Delta$. Since $m\Delta_{|F}\sim m M_{|F}$  there is an integral divisor $D$ on $X$ such that  $D\sim mM$ and $D_{|F}=m\Delta_{|F}$.


 Construct the cyclic covering $h:Z\la X$ corresponding to $D \sim mM$ and let $X^0$ be as in Definition \ref{cyc}. After possibly changing the birational model of   $X$ we can assume that $D$ is simple normal crossing and  $Z$ has rational singularities. We have the following diagram.
$$
\xymatrix{
  &               &Z\ar[dl]_h \ar@/^/[ddl]^{h'}\\
X_1\ar[d]^{f_1}&X\ar[l]_{\pi_X} \ar[d]^f&\\
Y_1           &\ar[l]_{\pi} Y&
}
$$
The restriction of $\pi_X$ to 
$X^0$ gives one of the cyclic covers used in the construction of the local systems 
$R^{\dim F}f_*\mathbb{V}_j$.

 We have that 
$$ 
h_*\omega_{Z}=\sum_{i=0}^{m-1}\o_X(K_X+iM-\round{iD/m}).
$$

{\bf Step IV - The G-action on $h'_*(\omega_{Z/Y})$.}

  We now proceed as \cite[3.8]{fm}. By the pull-back property \cite[Proposition 8.4.9 (3)]{ko} 
 we have that $L=\pi^*J(X_1/Y_1,R_1)$. 
Let $P\in Y$ and   localize 
everything 
in a neighborhood of $P_1=\pi(P)$ and $P$, and let $e$ be 
the ramification index at $P$. Let $z_1$ be a local coordinate for the germ 
$(Y_1,P_1)$ and $z=(z_1)^{1/e}$ for $(Y,P)$. 
Since  the divisor $D$ is $\mu_e$-equivariant over an open set $Y_0\subset Y$  there is a  group $G_0$  acting  on $Z_{|Y_0}$  
which fits in the sequence  $0\rightarrow\mu_m\rightarrow G_0\rightarrow\mu_e\rightarrow 0$. 
In fact if locally $X$ is Spec$A$ then $Z$ is Spec$A[\phi^{1/m}]$ where $\phi$ is a local equation of $D$. 
Since locally $D$ is $\mu_e$-equivariant $\mu_e$ acts on $\phi$ by multiplication by $e$-th root of unity $\epsilon$ and $\mu_m$ acts on $\phi^{1/m}$ by a multiplication by an $m$-th root of unity $\varepsilon$ there is $\mu_m\rtimes\mu_e$ action on $Z$. Thus we can define a $\mu_{er}$-action on 
the local systems $R^1h'_*\mathbb{V}_j$ where $r=m/(m,e)$ and hence on the canonical extension 
$h'_*\omega_{Z/Y}\otimes\mathbb{C}(P)$. The action on the summand  $
L\otimes\mathbb{C}(P)\subset h'_*\omega_{Z/Y}\otimes\mathbb{C}(P)$ is by a character $\chi_{P}$.By Lemma \ref{trivial},  $NJ(X_1/Y_1,R_1)$ is a divisor 
if and only if the 
character $\chi_P^N$ is trivial for every $P\in Y$. 

  Let $E$ be the general fibre of $h'$. Then by \ref{cyc} we have that $$h^0(E,\omega_E)=h^0(F,\sum_{i=0}^{m-1}\omega_F^{1-i}(-\round{i\Delta_{|F}}))\le (m-1)^2.$$
 
 Reasoning as in \cite[3.8]{fm} if $l$ is the order of $\chi_P$, then $\varphi(l)\le (m-1)^2$, where $\varphi(l)$ is the Euler function. 
Set $N(x)=\textrm{lcm}\{l|\varphi(l)\le x\}$. Then for $N_1=N((m-1)^2)$, the divisor $N_1J(X_1/Y_1,R_1)$ is integral.
\qed

{\em Remark.} Note that the number above is easy to compute explicitly. This is 
then main advantage of our approach.

\subsection{Auxiliary Lemma.}

 Let $Y$ be a smooth curve and let $h:Y'\la Y$ be a finite Galois cover with group $G$. 
Let $D$ be a $\q$-divisor on $Y$ such that $D'=h^*D$ is Cartier. 
For $p'\in Y'$ let $G_{p'}$ be the stabilizer. 
We have that $G_{p'}$ acts on $\o_{Y'}(D')\otimes \o_{P'}$ 
via a character $\chi_{p'}:G_{p'}\la \mathbb{C}$. In this setting 
we have the following lemma due to Fujino and Mori \cite{fm}.
\begin{lem}\label{trivial}(cf. \cite{fm}) For an integer $N$ the divisor $ND$ 
is integral if and only if for each $p'\in Y'$ the character $\chi_{p'}^{N}$ is trivial.
\end{lem}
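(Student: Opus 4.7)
My plan is to reduce to a purely local calculation at a single point of $Y$, matching the denominator in the local coefficient of $D$ with the order of the character $\chi_{p'}$. Since $Y$ is a smooth curve, both integrality of $ND$ and triviality of each $\chi_{p'}^N$ can be verified pointwise, so I fix $p \in Y$ and a preimage $p' \in h^{-1}(p)$, and set $e = |G_{p'}|$ to be the ramification index at $p'$.

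For the setup, I would choose a uniformizer $s$ at $p'$ and a uniformizer $t$ at $p$ with $h^*t = u \cdot s^e$ for some unit $u$. Writing the coefficient of $D$ at $p$ in lowest terms as $a/b$, the pulled-back divisor $D' = h^*D$ has coefficient $ae/b$ at $p'$, and the hypothesis that $D'$ is Cartier forces $b \mid ae$; since $\gcd(a,b)=1$ this says $b \mid e$. A local generator of the line bundle $\mathcal{O}_{Y'}(D')$ near $p'$ is then the rational function $s^{-ae/b}$.

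To identify $\chi_{p'}$, note that $G_{p'}$ is cyclic of order $e$, generated by some $\sigma$ acting on $s$ as multiplication by a primitive $e$-th root of unity $\zeta$. Thus $\sigma$ multiplies the generator $s^{-ae/b}$ by $\zeta^{-ae/b}$, giving $\chi_{p'}(\sigma) = \zeta^{-ae/b}$. The two conditions then match on the nose: $ND$ is integral at $p$ iff $b \mid N$, while $\chi_{p'}^N(\sigma) = \zeta^{-Nae/b} = 1$ iff $e \mid Nae/b$, which (again using $\gcd(a,b)=1$) is exactly $b \mid N$. Quantifying over $p' \in Y'$ yields the lemma in both directions.

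The one bookkeeping point that deserves care, and which I would treat as the main obstacle, is to verify that the character $\chi_{p'}$ is well defined on the fiber $\mathcal{O}_{Y'}(D') \otimes k(p')$, independently of the chosen local generator. This is formal: two local generators differ by a unit in $\mathcal{O}_{Y',p'}$, and any such unit is $\sigma$-invariant modulo the maximal ideal (since $\sigma$ fixes $p'$), so the scalar $\chi_{p'}(\sigma)$ depends only on the fiber. With this verified, the degenerate cases $p \notin \mathrm{supp}(D)$ and $p'$ unramified reduce to $a = 0$ and $e = 1$ respectively, where both sides of the claimed equivalence are trivially satisfied.
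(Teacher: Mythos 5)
The paper does not include a proof of this lemma; it is stated with a citation to Fujino--Mori, so there is nothing internal to compare against. Your argument is correct and is, in essence, the standard proof: reducing to a pointwise check on the smooth curve $Y$, computing the ramification-weighted coefficient $ae/b$ of $D'$ at $p'$, observing that the stabilizer $G_{p'}$ is cyclic of order $e$ acting on a uniformizer through a primitive $e$-th root of unity $\zeta$, and then verifying that $\chi_{p'}(\sigma)=\zeta^{-ae/b}$ has exact order $b$, so that $\chi_{p'}^N$ is trivial precisely when $b\mid N$, i.e.\ precisely when $ND$ is integral at $p$. The well-definedness remark is also handled correctly: the action on the fiber $\mathcal{O}_{Y'}(D')\otimes k(p')$ is intrinsic, and any two local generators differ by a unit whose ratio under $\sigma$ is $\equiv 1$ modulo the maximal ideal. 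One small point worth spelling out for completeness is why $G_{p'}$ is cyclic and acts on a suitably chosen uniformizer by a primitive $e$-th root of unity: this is because a finite group of automorphisms of the local ring of a smooth curve point injects into $\mathrm{GL}(m_{p'}/m_{p'}^2)\cong\mathbb{C}^*$, and one can linearize the action by averaging the uniformizer. With that remark added, the proof is complete.
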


\section{Iitaka fibrations for surfaces of log Kodaira dimension one.}
  
 In this section we prove Theorem \ref{appl} in dimension two. We start with the following lemma.

\begin{lemma}\label{simDCC} Let $(X,\Delta)$ be a klt pair of dimension $n$ where the coefficients 
of $\Delta$ are in a DCC set $\mathcal{A}\subset [0,1]$. Let $f:X\la Y$
 be a surjective projective morphism such that for the general 
fibre $F\cong\p$  we have that  $(K_X+\Delta)_{|F}\sim_\q 0$. 
Then the set $\mathcal{B}$ of coefficients of the horizontal components of $\Delta$ is finite. In particular there is an integer $m=m(\mathcal{B})$  that clears all the denominators of the horizontal components.
\end{lemma}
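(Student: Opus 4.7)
The approach is to pin down the horizontal coefficients by one numerical constraint on the general fibre, then extract finiteness from the DCC property of $\mathcal{A}$.

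First I reduce to the general fibre via adjunction. Decompose $\Delta=\Delta_h+\Delta_v$ into horizontal and vertical parts. For a very general fibre $F\cong\p$ the vertical components are disjoint from $F$, and combined with $F|_F\sim 0$ and the usual adjunction $(K_X+F)|_F\sim K_F$ this gives
$$
(K_X+\Delta)|_F \;\sim_\Q\; K_F+\Delta_h|_F.
$$
Writing $\Delta_h=\sum_i a_iD_i$ with $a_i\in\mathcal{A}$ and introducing the positive integer horizontal multiplicities $m_i:=D_i\cdot F$, the hypothesis $(K_X+\Delta)|_F\sim_\Q 0$ together with $\deg K_F=-2$ collapses to the single Diophantine identity
$$
\sum_i a_im_i \;=\; 2.
$$

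Next I bound the combinatorial data. Since $\mathcal{A}\subset[0,1]$ is DCC, its positive part (if non-empty) admits a strictly positive minimum $a_{\min}$. From $\sum a_im_i=2$ with $a_i\ge a_{\min}$ and $m_i\ge 1$ one immediately reads off $k\le 2/a_{\min}$ and $m_i\le 2/a_{\min}$, so the multiplicity tuple $(k;m_1,\ldots,m_k)$ ranges over a finite set depending only on $\mathcal{A}$.

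Finally, for each such tuple I would show that the fibre $S:=\{(a_1,\ldots,a_k)\in\mathcal{A}^k : \sum a_im_i=2\}$ is finite. This is the crux of the argument: if $S$ were infinite, then Erd\H{o}s--Szekeres combined with the DCC property (which rules out any infinite strictly decreasing subsequence in $\mathcal{A}$) would let one extract, coordinate-by-coordinate, an infinite subsequence $(a_1^{(n)},\ldots,a_k^{(n)})$ with every $a_i^{(n)}$ non-decreasing in $n$; but then the constancy of $\sum a_i^{(n)}m_i=2$ forces each coordinate to stabilise, contradicting the distinctness of the chosen tuples. Taking the union over the finitely many allowable multiplicity tuples gives a finite subset $\mathcal{B}\subset\mathcal{A}$ containing every horizontal coefficient, and the ``in particular'' statement follows because any finite set of rationals in $[0,1]$ admits a common denominator. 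The main technical obstacle is precisely this coordinate-by-coordinate DCC extraction (compare the general formalism of Section 2 of \cite{almo}); the rest of the argument is formal.
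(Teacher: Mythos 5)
Your argument is correct and rests on the same two ingredients as the paper's proof: the degree constraint $\sum_i a_i m_i = 2$ coming from $\deg K_F = -2$ on the general $\mathbb{P}^1$-fibre, and the DCC property of $\mathcal{A}$. The paper reaches the conclusion a bit more directly — since any $b\in\mathcal{B}$ satisfies $b+a=2$ for some $a\in\mathcal{A}_\infty$, an infinite increasing sequence in the DCC set $\mathcal{B}\subset\mathcal{A}$ would yield an infinite decreasing sequence in the DCC set $\mathcal{A}_\infty$ — but your bounding of $(k;m_1,\ldots,m_k)$ followed by coordinate-wise monotone extraction is just a more explicit rendering of the same DCC argument.
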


\begin{proof} We can describe  $\mathcal{B}$ as the set 
$\{b\in\mathcal{A}|b+a=2,\textrm{for some }a\in\mathcal{A}_\infty\}$.
 $\mathcal{B}$ is a subset  of a bounded  DCC set,
 so it is itself a bounded DCC set. 
If $\mathcal{B}$ is infinite, then there is an increasing infinite sequence.
 But this would give a decreasing infinite sequence in $\mathcal{A}_\infty$,
 which is   impossible since $\mathcal{A}_\infty$ is a DCC set.
\end{proof}
 
\begin{theo}\label{surf} Let $(X,\Delta)$ be a klt pair of dimension
 two and assume that the coefficients of $\Delta$ are in a 
DCC set of rational numbers $\mathcal{A}\subset [0,1]$. Assume that 
$\kappa(K_X+\Delta)=1$. Then there is an explicitly computable
 constant $N$ depending only on 
the set $\mathcal{A}$  such that
$\round{ N(K_X+\Delta) }$ induces the Iitaka fibration.
\end{theo}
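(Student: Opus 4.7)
The plan is to apply the canonical bundle formula of Theorem \ref{kodform} to the Iitaka fibration, bound the moduli part via Theorem \ref{main}, and then transfer the problem to the base curve in the manner of \cite[\S6]{fm}.

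First, after a log resolution and a relative MMP, I reduce to the situation where the Iitaka map is a morphism $f\colon X\to Y$ with $Y$ a smooth projective curve. The general fibre $F$ is a smooth curve satisfying $K_F+\Delta|_F\sim_\Q 0$, so either $F\cong\p$ with $\deg\Delta|_F=2$, or $F$ is elliptic with $\Delta|_F=0$. In the rational case Lemma \ref{simDCC} says the horizontal coefficients of $\Delta$ belong to a finite subset $\mathcal{B}=\mathcal{B}(\mathcal{A})$; the elliptic case is vacuous. Theorem \ref{kodform} gives
$$
K_X+\Delta \sim_\Q f^{*}(K_Y+B+M),
$$
and Theorem \ref{main} then yields an explicit integer $N_{1}=N_{1}(\mathcal{A})$ such that $N_{1}M$ is integral. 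After further birational modification of $Y$, I may also assume $M$ is nef by Kawamata's theorem.

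The second step is to push sections down to $Y$. For $N$ divisible by $N_{1}$, a standard analysis of the canonical bundle formula produces an inclusion
$$
\o_Y(\round{N(K_Y+B+M)})\hookrightarrow f_{*}\o_X(\round{N(K_X+\Delta)}),
$$
so it suffices to produce two independent sections of $L_{N}:=\round{N(K_Y+B+M)}$ on $Y$. Setting $D:=K_Y+B+M$, the nefness of $M$ and the effectivity of $B$ give $\deg D\geq \deg K_Y=2g-2$, while $\kappa(K_X+\Delta)=1$ forces $\deg D>0$. Writing
$$
\deg L_{N} = N\deg D - \sum_{P}\{Nb_{P}\},
$$
and using that the coefficients $b_{P}$ lie in the derivative set $\mathcal{A}'$, together with a DCC-based positive lower bound $b_{P}\geq b_{\min}(\mathcal{A})>0$ on each nonzero component of $B$, I can bound the number of components of $B$ by $\deg D / b_{\min}$ and hence deduce an estimate of the form $\deg L_{N}\geq c(\mathcal{A})N-C(\mathcal{A})$.

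Finally, Riemann--Roch on $Y$ finishes the job. When $g(Y)\geq 2$ one has $\deg D\geq 2g-2\geq g$, so $h^{0}(Y,L_{N})\geq 2$ already for $N$ a small multiple of $N_{1}$. When $g(Y)\leq 1$, the lower bound $\deg L_{N}\geq c(\mathcal{A})N-C(\mathcal{A})$ forces $\deg L_{N}\geq g+1$ for a uniform multiple $N=N(\mathcal{A})$, so again $h^{0}(Y,L_{N})\geq 2$. The two sections of $L_{N}$ pull back through $f$ to sections of $\round{N(K_X+\Delta)}$ that separate the general fibres of $f$, hence the associated map factors birationally through $f$ and therefore induces the Iitaka fibration. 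The main obstacle I foresee is the low-genus case: the denominators of $B$ need not be uniformly bounded in $\mathcal{A}$ (a multiple fibre of multiplicity $m$ contributes a coefficient $(m-1)/m$), so the lower bound on $\deg L_{N}$ cannot be obtained by simply clearing denominators and must instead be extracted from careful rounding bookkeeping that plays, for the discriminant, the role that Theorem \ref{main} plays for the moduli part, following \cite[\S6]{fm}.
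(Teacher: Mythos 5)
Your strategy follows the paper's: reduce to a morphism $f\colon X\to Y$ onto a curve, apply Theorem \ref{kodform}, control the moduli part by Theorem \ref{main}, and conclude by degree estimates on $Y$, split by genus. The place where your sketch does not close is the low-genus estimate. Bounding the number of components of $B$ by $\deg D/b_{\min}$ only yields
$$
\deg L_N \ \geq\ \deg D\,\bigl(N - 1/b_{\min}\bigr) \ -\ 2/b_{\min},
$$
which has the form $c(\mathcal{A})N - C(\mathcal{A})$ only if $\deg D=\deg(K_Y+B+M)$ is itself bounded below by a positive constant depending only on $\mathcal{A}$. You neither establish this nor name it: your closing diagnosis blames the unbounded denominators of $B$ and defers to Fujino--Mori \S6, but there the denominator set is fixed, so $\deg D>0$ automatically means $\deg D$ is bounded away from $0$; in the DCC setting this is precisely the nontrivial point. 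The missing bound is itself a DCC statement: $\deg M\in\tfrac{1}{N_1}\mathbb{Z}_{\geq 0}$ and $\deg B\in(\mathcal{A}')_\infty$ both lie in DCC sets of nonnegative reals, so among the values of $\deg M+\deg B$ exceeding $2-2g$ there is a minimum strictly greater than $2-2g$. The paper packages this, together with the rounding bookkeeping, in Lemma \ref{DCCr} (and uses the analogous observation in Case 2, $g=1$).

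A smaller point: you work with an inclusion $\mathcal{O}_Y(L_N)\hookrightarrow f_*\mathcal{O}_X(\round{N(K_X+\Delta)})$ rather than the paper's isomorphism on $H^0$. To conclude that the \emph{full} system $|\round{N(K_X+\Delta)}|$ factors birationally through $f$ you should also note that $f_*\mathcal{O}_X(\round{N(K_X+\Delta)})$ has generic rank one once $N$ is divisible by the $m$ of Lemma \ref{simDCC}, so that every section is constant along general fibres; without this the extra sections could a priori separate points within a fibre. The paper avoids this by arranging the isomorphism via \cite[Theorem 4.5]{fm}.
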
 
\begin{proof}
To prove the theorem we are free to  change the birational
 model of $(X,\Delta)$ (without changing  the coefficients of $\Delta$).
 So after running the Log Minimal Model Program we can assume that $K_X+\Delta$
 is nef. Log abundance for surfaces implies that $K_X+\Delta$ is semiample. 
Therefore there
 exists a positive integer $k$ such that $|\round{k(K_X+\Delta)}|
$ 
defines the Iitaka fibration $f:X\la Y$. 
 The morphism $f:X\la Y$ for $K_X+\Delta$ satisfies the
 hypothesis of Theorem \ref{kodform}, and hence we can write 
$$
K_X+\Delta\sim_\q f^*(K_Y+B+J).
$$
  By 
replacing the morphism $f:X \la Y$ by an appropriate model  we can assume that we have an isomorphism 
$$
H^0(X,\round{n(K_X+\Delta)})\cong H^0(Y,\round{n(K_Y+B+J)})
$$
for every natural number $n$ divisible by $m$ as of Lemma \ref{simDCC} and $\Delta$ is simple normal crossing over the generic point of $Y$ ( cf. \cite[Theorem 4.5]{fm}).
Here $Y$ is a smooth curve. The coefficients of $B$ are in a DCC set
 depending only on $\mathcal{A}$  (cf. \cite[ Remark 3.1.4]{ambroAC}).
  We follow the argument in Section 6 of \cite{fm}   
to compute an integer $N$ depending only on $\mathcal{A}$ for which $\round{N(K_Y+B+J)}$
 is an ample divisor.
 By Theorem \ref{main} there is an integer $m$, depending only on the 
DCC set $\mathcal{A}$ by Lemma \ref{simDCC}, 
for which 
$mJ$  is  integral. Also note that $\round{B}\ge 0$.

We  treat three cases.
\begin{description}
\item[Case 1] ($g\ge2$). For $N=3m$ we obtain that  deg$\round{N(K_Y+B+J)}\ge 2g+1$ and so 
the divisor in question is ample.
\item[Case 2] ($g=1$). We have that $\deg(J+B)>0$ and the coefficients  of $m(J+B)$ are of the form integer plus an element in  a fixed DCC set. Hence there is a positive constant $c=c(\mathcal{A})$ such that the multiplicity at of 
$m(J+B)$ at some point is greater than $c$. Then for $N>\frac{3}{c}$ we have that $\deg\round{N(J+B)}\ge3$. 
\item[Case 3] ($g=0$). In this case we have to find an integer $N$ such that deg$\round{N(J+B)}-2N>0$. This follows immediately form Lemma \ref{DCCr}.
\end{description}
\end{proof}       

\begin{lemma}\label{DCCr} For any set of elements $a_i$ in a DCC set $\mathcal{A}\subset (0,1)$ such that $-2+\sum_{i=1}^na_i>0$ there is an integer $N=N(\mathcal{A})$ such that $-2N+\sum_{i=1}^n\round{Na_i}>0$.
\end{lemma}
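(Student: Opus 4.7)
The plan is to invoke the DCC hypothesis twice---once to bound the ``excess'' $\sum a_i - 2$ uniformly away from zero, and once to bound each $a_i$ uniformly away from zero---and then to estimate the floors $\round{Na_i}$ by essentially linear functions of $Na_i$.

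First I would observe that since $\mathcal{A} \subset (0,1)$ is a nonempty DCC set it must have a minimum element $a_0 > 0$ (any nonempty DCC subset of $\mathbb{R}$ has a minimum, since otherwise one could extract an infinite strictly decreasing sequence). In particular $a_i \ge a_0$ for every $i$. Next, by the discussion in Section~2.3 the set $\mathcal{A}_\infty$ is again a DCC set, and the hypothesis $\sum a_i > 2$ places $\sum a_i$ in the nonempty subset $\mathcal{A}_\infty \cap (2,\infty)$. By DCC this subset has a minimum $s_0 > 2$, depending only on $\mathcal{A}$, so $\sum a_i \ge s_0$ for every admissible configuration of $a_i$'s.

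With these two constants in hand, I would fix an integer $K > s_0/(s_0 - 2)$, which ensures $(1 - 1/K)\, s_0 > 2$, and then set $N := \roundup{K/a_0}$; both $K$ and $N$ depend only on $\mathcal{A}$. For any real $x \ge K$ one has $\round{x} \ge x - 1 = x(1 - 1/x) \ge (1 - 1/K)\, x$. Since $Na_i \ge Na_0 \ge K$ for every $i$, applying this inequality with $x = Na_i$ and summing yields
$$
\sum_{i=1}^n \round{Na_i} \;\ge\; (1 - 1/K)\, N \sum_{i=1}^n a_i \;\ge\; (1 - 1/K)\, s_0\, N \;>\; 2N,
$$
which is the desired conclusion.

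The only delicate point is the existence of the threshold $s_0$: one must know that the admissible sums are uniformly bounded away from $2$, and this is precisely the content of the DCC property for $\mathcal{A}_\infty$ recalled in the Preliminaries. Once $s_0$ and $a_0$ are in hand the remainder of the argument is the elementary estimate above, so I expect no serious obstacles.
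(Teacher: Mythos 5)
Your proof is correct, and it takes a genuinely different and cleaner route than the paper's. The paper proceeds by induction on $n$, using the additive bound $\round{x} \ge x-1$ together with a case split: either $\sum a_i \ge 3$ (so one can drop a term and recurse) or $\sum a_i < 3$ (which forces $n < 3/c$ for $c < \min\mathcal{A}$, making the total loss of roughly $n$ from the floors controllable). Your argument replaces the additive bound by the multiplicative one $\round{x} \ge (1-1/K)x$ valid once $x \ge K$; since every $Na_i \ge Na_0 \ge K$, this lets you sum directly without ever counting the number of terms, and the DCC bound $\sum a_i \ge s_0 > 2$ on $\mathcal{A}_\infty$ absorbs the relative loss of $1/K$. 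Both approaches hinge on the same two DCC-derived constants (a positive lower bound on the $a_i$ and a uniform gap between the achievable sums and $2$), but your multiplicative estimate sidesteps the induction and the bound on $n$ entirely, yielding an explicit $N$ of roughly the same size as the paper's $(3+c)/(ck)$, namely $N \approx s_0/(a_0(s_0-2))$. This is a legitimate simplification.
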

\begin{proof} We proceed by induction on $n$. Let $c$ be any number 
$0<c<\min\mathcal{A}$ and  $k$ such that $0<k<\min\{\mathcal{A}_\infty\cap(2,\infty)\}-2$.
 The base case is $n=3$ and then it is enough to take $N>\frac{4}{k}$. In fact 
$$
\round{Na_1}+\round{Na_2}+\round{Na_3}\ge
\round{Na_1}+\round{Na_2}+\round{2N}-\round{N(2-a_3)}-1.
$$
But $N(a_1+a_2+a_3-2)>4$ hence $\round{Na_1}+\round{Na_2}-\round{N(2-a_3)}>2$ and so the desired inequality follows.

 For the inductive step suppose that $\sum_{i=1}^na_i\ge 3$ and order the $a_i$ so that $a_i\le a_{i+1}$. Then $\sum_{i=1}^{n-1}a_i>2$ and the assertion follows by induction. If not we have that $\sum_{i=1}^na_i < 3$  and hence $n<\frac{3}{c}$. It suffices to take $N>\frac{3+c}{ck}>\frac{n+1}{k}$ since then 
$$
\sum_{i=1}^n\round{Na_i}-2N\ge \round{\sum_{i=1}^nNa_i-2N}-n+1\ge 2.
$$
\end{proof}

\section{Iitaka fibration for threefolds of log Kodaira dimension two.}

 In this section we complete the proof of Theorem  \ref{appl} by proving it 
 in dimension three.

 \begin{theo}\label{3folds} Let $(X,\Delta)$ be a klt pair of dimension 
three and assume that the coefficients of $\Delta$ are in a DCC set of rational  number  
$\mathcal{A}\subset [0,1]$. Assume that $\kappa (K_X+\Delta)=2$. Then there is a constant $N$ depending only on 
the set $\mathcal{A}$  such that
$\round{ N(K_X+\Delta) }$ induces the Iitaka fibration. 
\end{theo}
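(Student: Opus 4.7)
The plan is to mirror the two-dimensional proof of Theorem \ref{surf}, with the base of the Iitaka fibration now a surface rather than a curve: the role played by the curve estimates of Lemma \ref{DCCr} will now be played by the birational-boundedness statement for klt surfaces of general type proved in the last section. After running the log MMP and invoking log abundance for klt 3-folds, we may assume $K_X+\Delta$ is semiample, and we take $f\colon X\la Y$ to be the resulting Iitaka fibration, with $Y$ a normal projective surface. Passing to a suitable birational model as in Step I of the proof of Theorem \ref{main} and in Theorem \ref{kodform}, we arrange that $Y$ is smooth and that the log canonical bundle formula yields
$$
K_X+\Delta\;\sim_\Q\;f^*(K_Y+B+J),
$$
together with isomorphisms $H^0(X,\round{n(K_X+\Delta)})\cong H^0(Y,\round{n(K_Y+B+J)})$ for every $n$ in a fixed arithmetic progression (cf.\ \cite[Theorem 4.5]{fm}).

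Since the relative dimension of $f$ is one, Theorem \ref{main} combined with Lemma \ref{simDCC} yields an integer $m=m(\mathcal{A})$ for which $mJ$ is Cartier, and the Prokhorov--Shokurov result (Theorem 8.1 of \cite{shok}) allows us, after a further birational modification, to assume in addition that $J$ is semiample. By Ambro \cite[Remark 3.1.4]{ambroAC} the coefficients of the discriminant $B$ lie in a DCC set $\mathcal{A}_1(\mathcal{A})$. A general member of $|mJ|$, divided by $m$, provides an effective $\Q$-divisor $J'\sim_\Q J$ with coefficients in $\{0,1/m\}$ and support in general position with respect to $B$; hence $(Y,B+J')$ is a klt pair whose boundary coefficients lie in the DCC set $\mathcal{A}_2(\mathcal{A})=\mathcal{A}_1+\{0,1/m\}$, and $K_Y+B+J'\sim_\Q K_Y+B+J$ is big because $\kappa(K_X+\Delta)=2$.

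Applying the birational-boundedness theorem for klt surfaces of general type (the second theorem of Section 1, proved in the last section) to $(Y,B+J')$ then produces an integer $N_0=N_0(\mathcal{A})$ such that $\round{N(K_Y+B+J')}$ defines a birational map whenever $N\ge N_0$. Choosing $N$ to be any multiple of $m$ with $N\ge N_0$, the isomorphism of sections above identifies $|\round{N(K_X+\Delta)}|$ with the pull-back of $|\round{N(K_Y+B+J)}|$, and composition with $f$ shows that this linear system induces the Iitaka fibration of $(X,\Delta)$. The main technical obstacle is tracking the DCC property through the descent to the surface base: this requires both the explicit denominator bound of Theorem \ref{main} on $J$ and Ambro's DCC statement for $B$, and is the precise reason that the surface-of-general-type boundedness theorem, rather than an elementary estimate of Lemma \ref{DCCr} type, is needed in dimension three.
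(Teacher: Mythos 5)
Your proposal follows essentially the same route as the paper: reduce via the log canonical bundle formula to a klt surface pair $(Y,B+J')$ with boundary coefficients in a DCC set controlled by $\mathcal{A}$ (perturbing the moduli part by a general member of a free linear system whose bounded index comes from Prokhorov--Shokurov), then invoke the surface birational-boundedness theorem, and lift sections back to $X$.

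There is, however, one technical step you pass over that the paper handles explicitly and that is not cost-free. The result actually proved in the last section, Theorem \ref{birsurf}, gives a birational map for $\roundup{m(K_Y+B')}$ (round \emph{up}), not $\round{m(\cdot)}$ (round \emph{down}); and the round-down Corollary following it is obtained by the very truncation trick it refers back to. You invoke the introduction's round-down version directly, but that version is not separately established. The paper closes this gap inside the proof of Theorem \ref{3folds}: using \cite[Theorem~4.6]{almo} it produces $\beta=\beta(\mathcal{B})$ so that $K_Y+(1-\beta)B_1$ is still big, sets $k=\roundup{1/(b\beta)}$ with $b=\min\mathcal{B}$, and replaces $B_1=B+\frac{1}{l}M'$ by the truncated boundary $B'=\sum\frac{\round{kb_i}}{k}B_i$, whose coefficients all have denominator $k$. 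Then $K_Y+B'$ is still big with coefficients in a finite set, $H^0(Y,\round{m(K_Y+B')})\subset H^0(Y,\round{m(K_Y+B_1)})$, and for $N$ a multiple of $k$ one has $\roundup{N(K_Y+B')}=\round{N(K_Y+B')}$, so Theorem \ref{birsurf} applies as stated. You should insert this truncation (or an equivalent argument) before citing the surface theorem; as written your step ``$\round{N(K_Y+B+J')}$ defines a birational map for $N\ge N_0$'' is not supported by what is proved. Also note a small slip: since $J'$ is in general position with respect to $B$, the coefficient set of $B+J'$ is $\mathcal{A}_1\cup\{1/m\}$, a union rather than the Minkowski sum $\mathcal{A}_1+\{0,1/m\}$ you wrote (the latter would only arise if $J'$ shared components with $B$).
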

\begin{proof} Performing the same type of reductions  in the proof of Theorem \ref{surf} 
  we assume that we are in the case when we have a morphism $f:X\la Y$ where $Y$ is a surface, $\Delta_{|F}$ is non-trivial and  we have an isomorphism  
$
H^0(X,\round{n(K_X+\Delta)})\cong H^0(Y,\round{n(K_Y+B+M)})
$
for every $n$ sufficiently divisible.
Here the divisor $K_Y+B+M$ is big, the coefficients of $B$ are in a DCC set
 depending only on $\mathcal{A}$ (cf. \cite[ Remark 3.1.4]{ambroAC}), $M$ is nef. Now take $n$ also  divisible by  by $l$ where  $l$ is an integer 
such that 
 $lM$ is integral and $|lM|$ is base point free. The  integer $l$ depends 
only on the DCC set $\mathcal{A}$. Such $l$ exists  by the case of   Conjecture \ref{conj}
 that is proven in \cite[Theorem 8.1]{shok}.

Notice that $(Y,B)$ is klt  by  \cite[Theorem 3.1]{ambroSBP} and also by
 \cite[Corollary 7.17]{shok}. 
The divisor $lM$ is base point free so we can replace it with a 
linearly equivalent divisor in $M'$, such that the the pair $(Y,B+\frac{1}{l}M')$ 
is klt and $H^0(Y,\round{n (K_Y+B+\frac{1}{l}M'}) = H^0(Y,\round{n(K_Y+B+M)})$ 
for every natural number $n$ divisible by $l$.

Now define the DCC set $\mathcal{B}=\mathcal{A'}\cup\{\frac{1}{l}\}$. Observe 
that $\mathcal{B}$ depends only on $\mathcal{A}$. Define $B_1=B+\frac{1}{l}M'=\sum_i b_iB_i$ where $B_i$ are distinct irreducible divisors.
 By \cite[Theorem 4.6]{almo} there is a computable constant $\beta$ that 
depends only on $\mathcal{B}$ such that $K_Y+(1-\beta)B_1$ is 
a big divisor. Let $b$ be the minimum of the set $\mathcal{B}$ and let 
$k=\roundup{\frac{1}{b\beta}}$. Then  define 
$B'=\sum_ib_i'B_i$ where $b_i'=\frac{\round{kb_i}}{k}$. We have that  the 
divisor $K_Y+B'$ is big with coefficients in the DCC set 
$\mathcal{C}=\{\frac{i}{k}|i=1,\ldots, k-1\}$. Also we have the inclusion 
$H^0(Y,\round{m(K_Y+B')}) \subset H^0(Y, \round{m(K_Y+B+\frac{1}{l}M')})$ 
for every $m$.

Now Theorem \ref{birsurf}  implies that there is a number $N'$ 
depending only on $\mathcal{A}$ such that $\roundup{m(K_Y+B')}$ 
defines a  birational map for $m\ge N'$. Define $N=kN'$. Then we have that $H^0(Y,\roundup{N(K_Y+B')})=H^0(Y,\round{N(K_Y+B')})\subset H^0(Y,\round{N(K_Y+B+\frac{1}{l}M')})$ and hence the
theorem follows.

\end{proof}

\section{Birational boundedness for log surfaces of general type.}
In this section we prove that for a surface pair $(X,\Delta)$ 
of log general type with the coefficients of $\Delta$ in a DCC set $\mathcal{A}$
 there is a number $N$ depending only on $\mathcal{A}$ such that the linear system 
 $|\roundup{N(K_X+\Delta)}|$
gives a birational map. Again by  \cite{alex} or  \cite[Theorem 4.8]{almo} we have that vol$(K_X+\Delta)>\alpha^2$ for some $\alpha$ depending 
only on the DCC set $\mathcal{A}$.
 We are going to use this lower 
bound of the volume to create a log canonical centre.
 The good case is when the  volume of the restriction of 
$K_X+\Delta$ to the log canonical centre is large. 
Then we can proceed by cutting down 
 the log canonical centre to a point and we generate
 a section of an appropriate multiple of $K_X+\Delta$. 
If the volume of the restriction is smaller then 
 we are going to proceed as in \cite{tod}.

\begin{theo}\label{birsurf} Let $(X,\Delta)$ be a klt surface of log general type
 and assume that  the coefficients of $\Delta$ are in a DCC set $\mathcal{A}\subset\q$.
 Then there is a number $N$ depending only on $\mathcal{A}$ such that $\roundup{m(K_X+\Delta)}$ defines a  birational map for $m\ge N$.
\end{theo}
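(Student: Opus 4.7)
The plan is to follow the strategy sketched immediately before the statement. By \cite[Theorem 4.8]{almo} there is an effectively computable constant $\alpha=\alpha(\mathcal{A})>0$ with $\mathrm{vol}(K_X+\Delta)\ge\alpha^2$. Given two very general points $x,y\in X$, a standard Riemann--Roch/volume estimate produces, for $m$ larger than an explicit function of $\alpha$, an effective $\mathbb{Q}$-divisor $D\sim_{\mathbb{Q}}\lambda(K_X+\Delta)$ with $\lambda=O(1/\alpha)$ having high multiplicity at both $x$ and $y$. After tie-breaking (as in \cite{chris1,tak}), I may assume that $(X,\Delta+D)$ is log canonical but not klt at $x$, with a unique minimal log canonical centre $V\ni x$, while still failing to be klt at $y$.

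If $\dim V=0$, I would conclude by the usual Nadel vanishing argument: applying Kawamata--Viehweg to the multiplier ideal $\mathcal{J}(X,\Delta+D)$ twisted by an appropriate multiple of $K_X+\Delta$ lifts sections separating $x$ and $y$, and running this over all very general pairs yields birationality of $|\lceil m(K_X+\Delta)\rceil|$ for all $m\ge N_0(\alpha)$.

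The substantive case is $\dim V=1$, so $V$ is a curve $C$. I would split it according to $v:=\mathrm{vol}_C\bigl((K_X+\Delta)|_C\bigr)$. If $v$ is bounded below by a constant depending only on $\mathcal{A}$, then $(K_X+\Delta)|_C$ is effectively big on $C$, and one can produce a second effective divisor $D'$ on $C$ with high multiplicity at a chosen point, extend it to $X$ via an adjoint-ideal / Kawamata extension argument, and cut $V$ down to an isolated point, reducing to the previous case. If instead $v$ can be made arbitrarily small, then as $x$ varies the centres $C$ sweep out $X$ in a covering family, yielding a non-constant rational map $\pi:X\dashrightarrow B$ to a smooth curve $B$ whose general fibre is such a $C$. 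Following \cite{mac,tod}, I would then apply Theorem \ref{kodform} together with the effective moduli bound of Theorem \ref{main}, and use the fact that the coefficients of the different $\mathrm{Diff}_C(\Delta)$ lie in a DCC set depending only on $\mathcal{A}$ (cf. \cite[Remark 3.1.4]{ambroAC}), to express $K_X+\Delta$ as the pullback of an effectively big $\mathbb{Q}$-divisor on $B$; then invoking effective birationality on the one-dimensional base (in the spirit of the proof of Theorem \ref{surf}) produces enough sections to separate general fibres, after which the previous cases applied fibrewise (or to a general hyperplane section transverse to $\pi$) finish the argument.

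The main obstacle is precisely this covering case: there is no isolated log canonical centre to feed into Nadel vanishing, so one must genuinely exploit the fibration $\pi$. Making the resulting $N$ depend only on $\mathcal{A}$ (and not on the particular $X$ or $\pi$) rests on three effective ingredients, all now available: the effective bound on the denominators of the moduli part from Theorem \ref{main}, the DCC property of the coefficients of the different, and effective birationality for log curves as in the argument proving Theorem \ref{surf}. Once these three inputs are combined the bookkeeping is routine.
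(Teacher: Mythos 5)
Your proposal tracks the paper's architecture through the easy cases (two general points, tie--breaking, isolated centre, zero-dimensional $V$, producing the covering family via \cite[Lemma 3.2]{mac}), but it goes wrong at the crux, namely the fibration case, and in a way that cannot be repaired by bookkeeping.

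You propose, once a fibration $\pi\colon X\dashrightarrow B$ to a curve is exhibited with general fibre a one-dimensional log canonical centre, to apply Theorem~\ref{kodform} together with the moduli bound of Theorem~\ref{main} and ``express $K_X+\Delta$ as the pullback of an effectively big $\mathbb{Q}$-divisor on $B$.'' This step is impossible. The canonical bundle formula (Theorem~\ref{kodform}) requires hypothesis (1), that $K_X+\Delta$ be the pullback of a $\mathbb{Q}$-Cartier divisor on the base; equivalently, the log Kodaira dimension of the general fibre must be zero. Here $K_X+\Delta$ is big, so it is positive on the general fibre of $\pi$ and is not a pullback from $B$; a pullback from a curve can never be big on a surface. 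You have imported the machinery that powers Theorems~\ref{surf} and \ref{3folds}, where the Kodaira dimension is strictly less than the dimension and the formula does apply, into the general type setting where it does not.

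The paper's actual resolution of the fibration case is different and more elementary: it shows directly that $(K_X+\Delta)\cdot V$ is bounded below by a constant depending only on $\mathcal{A}$, and then re-enters the Nadel vanishing argument. This lower bound is obtained in two sub-cases: if the map to $B$ is not a morphism, two centres $V_1\neq V_2$ meet at a smooth point $x$ and intersection theory gives $(K_X+\Delta)\cdot V_1\ge V_1\cdot V_2/a_1\ge 1/a_1$ (with a parallel argument after extracting non-terminal divisors when $x$ is singular); if the map is a morphism, the constant $\beta=\beta(\mathcal{A})$ and \cite[Theorem 4.6]{almo} (bigness of $K_{X''}+\Delta''+(1-\beta)\sum E_i+\lceil N_1\rceil$) are used, together with the Connectedness Principle to control exceptional coefficients, to bound the fibre degree from below. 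Neither Theorem~\ref{kodform} nor Theorem~\ref{main} enters at any point of this proof. To close the gap you would need to replace the pullback step with an argument of this latter kind -- directly controlling the fibre degree -- since there is no canonical bundle formula in play when $K_X+\Delta$ is big.
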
  
\begin{proof}
Consider a log resolution $f:X'\la X$ of $(X,\Delta)$ and write 
$f^*(K_X+\Delta)=K_{X'}+(f^{-1})_*\Delta+\sum_ie_iE_i$ with $E_i$ exceptional.
There is a natural number $n$ such that $e_i<1-\frac{1}{n}$ for every $i$. 
Define $\Delta'=(f^{-1})_*\Delta+\sum(1-\frac{1}{n})E_i$. Since we have the inclusion $H^0(X',\roundup{m(K_{X'}+\Delta')})\subset H^0(X,\roundup{m(K_X+\Delta)})$ by replacing the $\mathcal{A}$ with the DCC set $\mathcal{A}\cup\{1-\frac{1}{n}|n\in\mathbb{N}\}$ we can assume that $X$ is smooth.

By  \cite{alex} or  \cite[Theorem 4.8]{almo} we have that vol$(K_X+\Delta)>\alpha^2$ for some $\alpha$ depending 
only on the DCC set $\mathcal{A}$. 
Take  a Zariski decomposition 
$K_X+\Delta \sim_\q A +E $ with $A$ nef  and $E$ effective and $A$ 
orthogonal to each component of $E$.
We have that  vol$(K_X+D)=$vol$(A)>\alpha^2$ 

 Choose two general points $x_1,x_2 \in X$.
 Arguing as in \cite[Lemma 5.4 and Lemma 5.5]{tak} we can produce a divisor
 $D_1 \sim a_1 A$, with $a_1 < \frac {\sqrt{2}}{\alpha}$
 such that there is a non-empty subset
 $I_1$ of $\{1,2\}$ with the following property:
\begin{description}
\item[(*)] $(X,D_1)$ is lc but not klt at $x_i$ for $i \in I_1$ and not lc at $x_i$ for $i \notin I_1$.
\end{description}
With this choice of $a_1$ we can furthermore  assume that
 either codim Nklt$(X,D_1)=2$ at $x_i$ for 
$i\in I_1$ or Nklt$(X,D_1)=Z\cup Z_+$ such that $Z$ is irreducible 
curve and $x_i$ is in $Z$ but not in $Z_+$ for $i \in I_1$.
 
Assuming that $Z\cdot A > c$ for some constant $c$ 
and still following \cite[Lemma 5.8]{tak}
we can produce a divisor $D_2\sim a_2 A$ with $a_2 <c+\epsilon+a_1$ 
such that there is 
a subset $I_2$ of $\{1,2\}$ with the property that $(X,D_2)$
is lc but not klt at $x_i\in I_2$ and not lc at $x_i$ for $i \notin I_2$ and codim Nklt$(X,D_2)=2$ at $x_i$ for $i \in I_2$.

  Now if we set $G=D_2+(m-1-a_2-\epsilon)A+(m-1)E+F$ 
where $0<\epsilon\ll 1$ and $F=\roundup{(m-1)K_X+m\Delta}-(m-1)K_X-(m-1)\Delta$
 we observe that $\roundup{(m-1)K_X+m\Delta}-G\sim_\q\epsilon A$. Since $A$ 
is an ample divisor  Kawamata-Viehweg 
vanishing implies that  $H^1(X,\roundup{m(K_X+\Delta)}\otimes\j(G))=0$ for  
$m>a_2+1$ and hence the linear system $|\roundup{m(K_X+\Delta)}|$ 
gives a birational map onto its image (cf. \cite[Chapter 9]{raz}).

 Thus we can  now assume that for every general point $x\in X$ we have a pair $(D_x,V_x)$, such that $D_x \sim a_1 A$,
 $V_x$ is a pure log canonical centre of 
$D_x$, and dim $V_x = 1$. By 
 \cite[Lemma 3.2]{mac} we  have a diagram 
$$
\xymatrix{
X'\ar[d]^f \ar[r]^\pi & X \\
B&
}
$$
where $\pi$  is dominant and generically finite morphism of normal projective 
 varieties, 
and the image of the general fibre of $f$ is $V_x$ for some $x\in X$.

 Arguing as in section 3 of \cite{tod} we can assume that 
the map $\pi$ is birational. In fact if $\pi$ is not birational we have at
least two centres of log canonical singularities through a general 
point. Replacing each such pair of centres   with a \emph{minimal} centre 
we may assume  that the dimension of the centres is zero and this way   
 $\roundup{m(K_X+\Delta)}$ gives a birational map onto its image 
for $m>3a_1+1$ (compare \cite[page 11]{tod}).

 Thus we consider the case when $\pi$ is birational. We  replace $X$ with a 
model on which $K_X+\Delta$ is nef and big.
To complete the proof we will show that the degree of the restriction of $K_X+\Delta$ to a log canonical centre through a general point on an appropriate model is bounded from below by a constant that depends only on the DCC set $\mathcal{A}$. This is enough since we can apply Kawamata-Viehweg vanishing as before before to produce sections with the desired properties and hence a birational map.

 If $X \la B$ is not a morphism (over a general point $b \in B$ then there is a point $x\in X$ such that we have at least two pairs $(D_1,V_1)$ and $(D_2,V_2)$, such that $D_i \sim a_1 (K_X+\Delta)$,
 $x\in V_i$  a pure log canonical centre of 
$K_X+\Delta+D_i$ of dimension 1 and $V_1\neq V_2$ corresponding to two general fibres of $f$. If $x$ is a smooth point, we have that 
$$(K_X+\Delta)\cdot V_1=\frac{1}{a_1}D_2\cdot V_1 \ge \frac{1}{a_1} V_2\cdot V_1 \ge\frac{1}{a_1}$$
 since $V_1^2\ge 0$.

 If $x$ is not smooth then $(X,\Delta)$ is not terminal at $x$ and so there is a projective birational morphism $\pi:X'\la X$ extracting a divisor of discrepancy less than or equal to zero. Therefore $\pi^*(K_X+\Delta)=K_{X'}+\Delta'$ where $\Delta'\ge 0$ and $K_{X'}+\Delta'$ is still nef and big. Since there are only finitely many divisors of non-positive discrepancy after finitely many extractions as above  we may assume that  there is a morphism $f:X'\la B$. Thus we may  write $\pi^*(K_X+\Delta)=K_{X'}+\Delta'$. Here $\Delta'$ is effective and $K_{X'}+\Delta'$ is  nef and big. 

Now let $\beta=\beta(\mathcal{A})$ be as defined in 3.5 of \cite{almo}. We can assume that every $\pi$-exceptional divisor that dominates $B$ appears with a coefficient 
grater than $1-\beta$ in $K_{X'}+\Delta'$. In fact suppose that this is not the case for an exceptional divisor $E$. Away from the intersection of $E$ with the other components of $\Delta'$ the divisor $E$ intersects two general fibres 
$F_1$ and $F_2$ corresponding to two log canonical centres as before.  With this choice there are no other log canonical places of $K_X+\Delta+D_1+D_2$ 
lying over $\pi(E)$ 
connecting the  intersection of $E$ with $F_1$ and $F_2$. Then by the Connectedness Principle $E$ is a log canonical place for $K_X+\Delta+D_1+D_2$. In particular mult$_E\pi^*D_i\ge \frac{\beta}{2}$  for at least one of the $D_i$, say $D_1$, and hence  $(K_X+\Delta)\cdot V_1>\frac{\beta}{2a_1}$.

 Now take a log resolution $g:X''\la X'$ of $(X',\Delta')$ and let $f'=f\circ g$ 
and write $K_{X''}+\Delta''+\sum e_iE_i+N_1=g^*(K_{X'}+\Delta')+N_2$ 
where $\Delta'''+\sum e_iE_i+N_1$ and $N_2$ are effective with no common components, $\Delta''$ is the strict transform of $\Delta$ and the $E_i$ are the strict transforms of the $\pi$-exceptional divisors that dominate $B$  with $g_*(\Delta''+\sum e_iE_i+N_1)=\Delta'$ ($N_1$ and $N_2$  do not intersect the general fibre of $f'$).   The divisor  
$K_{X''}+\Delta''+\sum E_i+\roundup{N_1}$ is big and $(X'', \Delta''+\sum E_i+\roundup{N_1})$ is lc  with the coefficients in a DCC set that depends only on $\mathcal{A}$ and so 
 by \cite[Theorem 4.6]{almo} the divisor $K_{X''}+\Delta''+(1-\beta)\sum E_i+\roundup{N_1}$ 
is still big.  Hence for the general fibre $F'$ of $f'$ we have that deg$(K_{X''}+\Delta''+(1-\beta)\sum E_i+\roundup{N_1})_{|F'}=\deg (K_{X''}+\Delta''+(1-\beta)\sum E_i)_{|F'}\ge c>0$ where $c$ depends only on $\mathcal{A}$.
Now $g_*(\Delta''+(1-\beta)\sum E_i)\le \Delta'$ and so $(K_{X'}+\Delta')_{|F}\ge c$. Since $K_{X'}+\Delta'=\pi_*(K_X+\Delta)$  it follows that $(K_X+\Delta)\cdot V_1\ge c$.

\end{proof}
 
\begin{cor}  Let $(X,\Delta)$ be a surface klt pair of log general type 
and assume that  the coefficients of $\Delta$ are in a DCC set 
$\mathcal{A}$. Then there is a number $N$ depending only on $\mathcal{A}$ such that $\round{N(K_X+\Delta)}$ defines a  birational map.
\end{cor}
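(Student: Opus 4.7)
The plan is to reduce to Theorem \ref{birsurf}, which produces an integer $N = N(\mathcal{A})$ such that $\roundup{m(K_X+\Delta)}$ defines a birational map for $m \ge N$. The difficulty is that $\round{m(K_X+\Delta)} \le \roundup{m(K_X+\Delta)}$, so birationality of the upper-rounding linear system does not directly yield birationality of the lower-rounding one. I would bridge this gap by replacing $\Delta$ with an auxiliary divisor $\Delta' \le \Delta$ whose coefficients have a common denominator $k = k(\mathcal{A})$: then for $k \mid N$ one has $\roundup{N(K_X+\Delta')} = N(K_X+\Delta') \le \round{N(K_X+\Delta)}$, and Theorem \ref{birsurf} applied to $(X,\Delta')$ closes the argument.

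Concretely, write $\Delta = \sum a_i \Delta_i$ with $a_i \in \mathcal{A}$ and define $\Delta' = \sum_i (\round{ka_i}/k)\Delta_i$, following the same truncation trick used in the proof of Theorem \ref{3folds}. The coefficients of $\Delta'$ lie in the finite DCC set $\mathcal{A}' = \{j/k : 0 \le j \le k-1\}$, which depends only on $\mathcal{A}$. For $k \mid N$, writing $N = k\ell$ and $Na_i = \ell\round{ka_i} + \ell\{ka_i\}$ yields $\round{Na_i} \ge \ell\round{ka_i} = N(\round{ka_i}/k)$, hence the divisor inequality $\round{N(K_X+\Delta)} \ge N(K_X+\Delta')$ and the corresponding inclusion $H^0(X, N(K_X+\Delta')) \subset H^0(X, \round{N(K_X+\Delta)})$.

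The main obstacle is ensuring that $(X,\Delta')$ remains of log general type. Klt-ness is automatic from $\Delta' \le \Delta$, but for bigness of $K_X+\Delta'$ I would invoke \cite[Theorem~4.6]{almo}: there exists $\beta = \beta(\mathcal{A}) > 0$ with $K_X + (1-\beta)\Delta$ big. Since $\mathcal{A}$ is a DCC set of non-negative numbers, the set $\mathcal{A} \cap (0,1]$ has a strictly positive minimum $a_0 = a_0(\mathcal{A})$ (we may assume $\Delta \neq 0$, the other case being classical). Choosing any integer $k \ge 1/(\beta a_0)$ then forces $\round{ka_i}/k \ge a_i - 1/k \ge (1-\beta) a_i$ componentwise, so $\Delta' \ge (1-\beta)\Delta$ and $K_X + \Delta'$ is big.

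Finally, Theorem \ref{birsurf} applied to the klt surface pair $(X, \Delta')$ of log general type, whose coefficients lie in the finite DCC set $\mathcal{A}'$, produces $N' = N'(\mathcal{A})$ such that $\roundup{m(K_X+\Delta')}$ defines a birational map for every $m \ge N'$. Taking $N$ to be any multiple of $k$ with $N \ge N'$, which depends only on $\mathcal{A}$, the inclusion above propagates this birationality to the linear system $|\round{N(K_X+\Delta)}|$, proving the corollary.
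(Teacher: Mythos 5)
Your proposal is correct and follows essentially the same path as the paper: the paper's proof simply refers back to the truncation step in the proof of Theorem~\ref{3folds}, which is exactly the device you spell out (use \cite[Theorem 4.6]{almo} to find $\beta$, take $k=\roundup{1/(\beta a_0)}$, replace the coefficients $a_i$ with $\round{ka_i}/k$ to get $\Delta'\le\Delta$ with common denominator $k$ and $K_X+\Delta'$ still big, then apply Theorem~\ref{birsurf} and pass sections through the inclusion $N(K_X+\Delta')\le\round{N(K_X+\Delta)}$ for $k\mid N$). Your write-up just makes explicit the divisibility hypothesis $k\mid N$ and the elementary inequality $\round{Na_i}\ge N\round{ka_i}/k$, which the paper leaves implicit under ``proceed with integral divisors only.''
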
 
\begin{proof} Change the coefficients as in the last part of the proof of Theorem \ref{3folds} and reduce to the case in which all the denominators of the coefficients of  $\Delta$ are  the same. Then by taking an appropriate multiple proceed with integral divisors only.
\end{proof}

\end{document}